\DeclarePairedDelimiter\ceil{\lceil}{\rceil}
\newtheorem{thm}{Theorem}[section]
\crefname{thm}{Theorem}{Theorems}
\newtheorem{lem}[thm]{Lemma}
\crefname{lem}{Lemma}{Lemmas}
\newtheorem{conj}[thm]{Conjecture}
\crefname{conj}{Conjecture}{Conjectures}
\crefname{claim}{Claim}{Claims}
\newtheorem{prop}[thm]{Proposition}
\crefname{prop}{Proposition}{Propositions}
\crefname{cor}{Corollary}{Corollaries}
\crefname{property}{Property}{Properties}
\newtheorem{que}[thm]{Question}
\crefname{que}{Question}{Questions}
\crefname{task}{Task}{Tasks}
\theoremstyle{definition}
\newtheorem{defn}[thm]{Definition}
\crefname{defn}{Definition}{Definitions}
\newtheorem{rmk}[thm]{Remark}
\crefname{rmk}{Remark}{Remarks}
\newtheorem*{ack}{Acknowledgements}
\numberwithin{equation}{section}
\newtheorem{ex}[thm]{Example}
\crefname{ex}{Example}{Examples}
\def\Q{{\mathbb Q}}
\def\R{{\mathbb R}}
\def\Z{{\mathbb Z}}
\def\P{{\mathbb P}}
\def\G{{\mathbb G}}
\def\QQ{\overline{\mathbb Q}}
\def\p{{ \mathfrak{p}}}     %%added
\def\O{{ \mathcal{O}}}
\def\I{{ \mathcal{I}}}
\def\KK{ \overline{K}}
\newcommand{\e}{\epsilon}
\newcommand{\f}{\varphi}
\renewcommand{\l}{\lambda}
\DeclareMathOperator{\id}{id}
\DeclareMathOperator{\Spec}{Spec}
\DeclareMathOperator{\Exc}{Exc}
\DeclareMathOperator{\lct}{lct} 
\DeclareMathOperator{\ct}{ct} 
\DeclareMathOperator{\codim}{codim} 
\DeclareMathOperator{\discrep}{discrep}
\title[]
{ Vojta's conjecture, heights associated with subschemes, and primitive prime divisors in arithmetic dynamics}
\author{Yohsuke Matsuzawa}
\address{Department of Mathematics, Box 1917, Brown University, Providence, Rhode Island 02912, USA}
\email{\href{mailto:matuzawa@math.brown.edu}{matsuzawa@math.brown.edu}}
\begin{document}

\begin{abstract}

Assuming Vojta's conjecture, we give a sufficient condition for the limit
\[
\lim_{n \to \infty} \frac{h_{Y}(f^{n}(x))}{h_{H}(f^{n}(x))}
\]
is equal to zero, where $f \colon X \longrightarrow X$ is a surjective self-morphism on a smooth projective variety $X$, $h_{H}$ is an ample height function on $X$,
and $h_{Y}$ is a  global height function associated with a closed subscheme $Y \subset X$ of codimension at least two.
Based on this, we propose a conjecture on a sufficient condition for the limit being zero.
We point out that our conjecture implies Dynamical Mordell-Lang conjecture for endomorphisms on $\P^{2}_{\QQ}$.
We also discuss applications of Vojta's conjecture with truncated counting function to the problem of 
existence of primitive prime divisors of coordinates of orbits of $f$ 

\end{abstract}

\maketitle

\setcounter{tocdepth}{1}
\tableofcontents

\section{Introduction}

In arithmetic dynamics, it is important to understand the growth of 
height functions along orbits.
Let $K$ be a number field and let $f \colon X \longrightarrow X$ be a surjective self-morphism on a smooth projective geometrically irreducible variety $X$,
all defined over $K$.
In \cite{ma20}, the author proposed the following question.

\begin{que}\label{q:intro-main}
Let $Y \subset X$ be a proper closed subscheme and let 
\begin{itemize}
\item
$\l_{Y,v}$ be a local height function associated with $Y$ at the place $v$ of $K$;
\item
$h_{Y}$ be a global height function associated with $Y$
\end{itemize}
(see \S \ref{sec:htdef} for the definitions of $\l_{Y,v}$ and $h_{Y}$).

Let $h_{H}$ be a height function associated with an ample divisor $H$ on $X$.
Let $x \in X(K)$ be a point.
Under what conditions on $f, Y$, and $x$, do we have
\begin{enumerate}
\item\label{q:local}
\begin{align*}
\lim_{n \to \infty}\frac{\l_{Y,v}(f^{n}(x))}{h_{H}(f^{n}(x))}=0 \quad ?
\end{align*}

\item\label{q:global}
\begin{align*}
\lim_{n \to \infty}\frac{h_{Y}(f^{n}(x))}{h_{H}(f^{n}(x))}=0 \quad ?
\end{align*}
\end{enumerate}

\end{que}

Let us briefly explain what this question means using a concrete example of a self-morphism on $\P^{2}$.
Let us consider a morphism
\begin{align*}
f \colon \P^{2}_{\Q} \longrightarrow \P^{2}_{\Q}, (s : t : u) \mapsto (s^2+t^2+u^2 : tu  : s^2+tu)
\end{align*}
and the orbit of $x = (1:1:1)$ under $f$ (this particular form of $f$ is not important).
Write $f^{n}(x) = (a(n):b(n):c(n))$ where $a(n), b(n), c(n)$ are coprime integers.
The following is the first $7$ iterates.

\vspace{2mm}

\begin{center}

{\tiny
\begin{tabular}{llllc}
$n$ & $a(n)$ & $b(n)$ & $c(n)$ &   $\gcd(a(n),b(n))$ \\
$0$ & $1$ & $1$ & $1$ &  $1$ \\
$1$ & $3$ & $1$ & $2$ &   $1$ \\
$2$ & $14$ & $2$ & $11$ &   $2$ \\
$3$ & $321$ & $22$ & $218$ &   $1$ \\
$4$ & $151049$ & $4796$ & $107837$ &   $1$ \\
$5$ & $34467620586$ & $517186252$ & $23332986653$ &   $2$ \\
$6$ & $1732712616628784933309$ & $12067499915031094556$ & $1200084368775482077952$ &   $1$ \\
\end{tabular}
}

\end{center}

\vspace{2mm}

First, we see that the number of digits of the coordinates grows exponentially.
This corresponds to the fact that $h_{H}(f^{n}(x)) \gg \ll 2^{n}$
which is actually a very special case of so called Kawaguchi-Silverman conjecture.

Second, the sizes of all coordinates look growing in the same speed.
There is a standard way to deduce such simultaneous growth from 
estimates as in \cref{q:intro-main} (\ref{q:local}) for lines $Y=(s=0)$, $(t=0)$, and $(u=0)$,
cf.\ \cite{sil93}, \cite{ma20}.
This is so called Dynamical Lang-Siegel problem.

Third, as in displayed in the table, the greatest common divisor of the first two coordinates $a(n)$ and $b(n)$
are very small.
\cref{q:intro-main} (\ref{q:global}) addresses this kind of problems.
In fact, we have the following:
\begin{align*}
h_{\{(0:0:1)\}}(a:b:c) = \log \frac{\max\{|a|, |b|, |c|\}}{\max\{|a|, |b|\}} + \log \gcd(a,b)
\end{align*}
where $a, b, c$ are coprime integers.
Because of this formula, $h_{Y}$ is sometimes called generalized greatest common divisor when $Y$ has codimension at least two.
Therefore, an affirmative answer to \cref{q:intro-main} (\ref{q:global}) in this case would give a bound of the following form:
for arbitrary small $\e>0$ 
\begin{align*}
\log \gcd(a(n), b(n)) \leq \e 2^{n}
\end{align*}
for $n$ large enough.
This is actually a deep arithmetic phenomenon.
Indeed, this is equivalent to the famous Bugeaud-Corvaja-Zannier \cite{bcz} (cf.\ \cref{ex:a=e=1}).
There are only few cases that such gcds are proven to be small and all the proofs rely on 
Schmidt subspace theorem (cf.\ \cref{ex:cz,ex:a=e=1,prop:Gmcase}).

Finally, let us use this example to introduce another problem, the existence of primitive prime divisors.
The following is the prime factorization of the first coordinate $a(n)$.

\vspace{2mm}

\begin{center}

\begin{tabular}{ll}
$a(n)$ &  prime factorization of $a(n)$ \\
$1$   & $1$ \\
$3$  & $3$ \\
$14$   & $2 \cdot 7$ \\
$321$   & $3 \cdot 107$ \\
$151049$   & $151049$ \\
$34467620586$   & $2 \cdot 3 \cdot 7 \cdot 19 \cdot 2371 \cdot 18217$ \\
$1732712616628784933309$   & $199 \cdot 8707098576024044891$ \\
\end{tabular}

\end{center}

\vspace{2mm}

We can observe that at least one new prime number appears at every step.
Such new prime factors are called primitive prime divisors (of the sequence $a(n)$).
In terms of height function, $p$ is a primitive prime divisor of $a(n)$ if and only if 
$\l_{H, p}(f^{n}(x))>0$ and $\l_{H,p}(f^{m}(x))=0$ for $m <n$.
Here $H$ is the line defined by $(s=0)$.
We discuss this problem in \S \ref{sec:ppd}.
In particular, we prove the existence of primitive prime divisors assuming Vojta's conjecture.
Besides Vojta's conjecture, an estimate of the form as in \cref{q:intro-main} (\ref{q:global}) plays an important role in the proof.

\vspace{10mm}

There are motivations to consider \cref{q:intro-main}  (\ref{q:global}) arising from the study of growth of 
usual Weil height functions.
The function $h_{Y}$, where $\codim Y\geq 2$,
usually shows up as an ``error term'' when we study the growth of Weil height function.
For example, suppose you have a rational map $\pi \colon X \dashrightarrow Z$ to another variety $Z$
and study the sequence $h_{H_{Z}}(\pi(f^{n}(x)))$, 
where $h_{H_{Z}}$ is a Weil height function associated with an ample divisor $H_{Z}$ on $Z$
(cf.\ \cite{bgs19,bfs20}). 
We can rewrite as $h_{H_{Z}}(\pi(f^{n}(x))) = h_{\pi^{*}H_{Z}}(f^{n}(x)) - h_{I_{\pi}}(f^{n}(x))$
where $I_{\pi}$ is the indeterminacy locus of $\pi$, which is a closed subscheme of $X$ of codimension at least two.
It is usually easier to estimate the first term $h_{\pi^{*}H_{Z}}(f^{n}(x))$ and the difficult part is estimating $h_{I_{\pi}}(f^{n}(x))$.

The similar problem happens in the study of the Kawaguchi-Silverman conjecture for self-rational maps ( see \cite{ks3,sil} for the statement of the conjecture ).
The conjecture predicts the growth rate of the sequence $h_{H}(f^{n}(x))$.
If we have good estimates of the function $h_{I_{f}}$ along orbits, 
it would be helpful for the study of Kawaguchi-Silverman conjecture.
(In this context, we should consider \cref{q:intro-main} for rational self-maps.)

\vspace{10mm}

\cref{q:intro-main} (\ref{q:local}) is expected to be true for arbitrary proper closed subscheme $Y$ with appropriate conditions.
As we explain above, a direct consequence of this type of statement is so called Dynamical Lang-Siegel type theorem (cf. \cite[Corollary 1.20]{ma20}).
In \cite{ma20}, we discuss \cref{q:intro-main}  (\ref{q:local}) and gave sufficient conditions when $\dim Y=0$ unconditionally and
assuming Vojta's conjecture when $\dim Y>0$. 

In this paper, we focus on \cref{q:intro-main}  (\ref{q:global}).
We obviously cannot expect (\ref{q:global}) when $Y$ is a divisor.
When $Y$ has codimension at least two, however,  
$h_{Y}$ can be considered as a generalization of greatest common divisor, as we mentioned above,
and it seems reasonable to expect that $h_{Y}$ is fairly small compared with $h_{H}$.
Indeed, assuming Vojta's conjecture , we give a sufficient condition in terms of natural invariants attached to the dynamical system
(\cref{thm:intro:hYvshH}).
We also point out that there are applications of \cref{q:intro-main} (\ref{q:global}) to
Dynamical Mordell-Lang conjecture for $\P^{2}$ and existence of primitive prime divisors (as mentioned above).
Although most of the theorems in this paper are conditional,
we believe they are still interesting as they reveal possible correct geometric conditions that are essential for
these problems.

\subsection{Main theorems}

To state the our main theorems,
let us introduce two quantities ``$ \alpha_{f}(x)$'' and ``$e(Y)$''.

\begin{defn}
Let $K$ be a number field and $X$ be a smooth projective geometrically irreducible variety over $K$.
Let $h_{H}$ be a height function on $X$ associated with an ample divisor $H$ on $X$.
Let $f \colon X \longrightarrow X$ be a surjective morphism.
For any point $x \in X(\KK)$, \emph{the arithmetic degree of $f$ at $x$} is 
\[
\alpha_{f}(x):= \lim_{n\to \infty} \max\{1, h_{H}(f^{n}(x))\}^{1/n}.
\]
This limit always exists and is independent of the choice of $H$ and $h_{H}$
(cf. \cite{ks3, ks1}) .
\end{defn}

\begin{rmk}
Let $d_{1}(f)$ be the first dynamical degree of $f$, i.e. 
the maximum modulus of eigenvalues of $f^{*} \colon N^{1}(X_{\KK}) \longrightarrow N^{1}(X_{\KK})$,
where $N^{1}(X_{\KK})$ is the group of divisors modulo numerical equivalence.
Then it is know that $ \alpha_{f}(x) \leq d_{1}(f)$ for all $x \in X(\KK)$ (\cite{ks1,ma}) and
conjectured that the equality holds if $x$ has Zariski dense $f$-orbit (Kawaguchi-Silverman conjecture).
\end{rmk}

\begin{ex}
When $X= \P^{N}_{K}$, $ \alpha_{f}(x)= d_{1}(f)$ if the $f$-orbit of $x$ is infinite and $ \alpha_{f}(x)=1$ otherwise.
In this case, the first dynamical degree $d_{1}(f)$ is just the degree of the coprime homogeneous polynomials
defining $f$.
\end{ex}

Arithmetic degree measures the asymptotic growth rate of $h_{H}(f^{n}(x))$.
Indeed, we have the following.

\begin{prop}[{\cite[Theorem 1.1]{sano18}} ]\label{prop:growthht}
Suppose $ \alpha_{f}(x) >1$.
Then there are a non-negative integer $l$ and positive real numbers $C_{1}, C_{2}$ such that
\[
C_{1}n^{l} \alpha_{f}(x)^{n} \leq \max\{1, h_{H}(f^{n}(x))\} \leq C_{2} n^{l} \alpha_{f}(x)^{n} 
\]
for all $n\geq 1$. 
\end{prop}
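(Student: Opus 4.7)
The plan is to combine the Weil height machine with the Jordan decomposition of the pullback action $f^{*}\colon N^{1}(X)_{\R}\to N^{1}(X)_{\R}$ to extract an explicit asymptotic expansion for $h_{H}(f^{n}(x))$. Let $d_{1}(f)=\mu_{1}>\mu_{2}>\cdots>\mu_{r}\geq 0$ denote the distinct moduli of eigenvalues of $f^{*}$, and let $k_{i}$ be the largest Jordan block size among eigenvalues of modulus $\mu_{i}$. I would choose a basis $D_{1},\dots,D_{\rho}$ of $N^{1}(X)_{\Q}$ adapted to the real Jordan form and attach Weil heights $h_{D_{j}}$ to each class.

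Applying the functoriality relation $h_{D}\circ f=h_{f^{*}D}+O(1)$ iteratively gives, for any $\R$-divisor class $D$,
\[
h_{D}(f^{n}(x))=\sum_{i=1}^{r}P_{i,D}(n,x)\,\mu_{i}^{n}+O(1),
\]
where each $P_{i,D}(n,x)$ is a polynomial in $n$ of degree at most $k_{i}-1$ whose coefficients are bounded linear combinations of trigonometric factors $\cos(n\theta_{i,s}),\sin(n\theta_{i,s})$ arising from the complex eigenvalues of modulus $\mu_{i}$. Writing $H$ in the chosen basis yields the analogous expansion for $h_{H}(f^{n}(x))$. Taking $n$-th roots and passing to the limit forces $\alpha_{f}(x)$ to equal the largest modulus $\mu_{i_{0}}$ for which $P_{i_{0},H}(\,\cdot\,,x)\not\equiv 0$, and I set $l:=\deg_{n}P_{i_{0},H}(\,\cdot\,,x)$.

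The upper bound $h_{H}(f^{n}(x))\leq C_{2}n^{l}\alpha_{f}(x)^{n}$ is then immediate by bounding every term in absolute value and absorbing the subleading contributions with $\mu_{i}<\mu_{i_{0}}$ into $C_{2}$. For the lower bound, the leading block $P_{i_{0},H}(n,x)\,\mu_{i_{0}}^{n}$ must stay comparable to $n^{l}\alpha_{f}(x)^{n}$; here ampleness of $H$ supplies $h_{H}\geq -C$ globally on $X(\KK)$, which rules out deep cancellations, and combined with a summation-by-parts argument across a window of consecutive iterates one upgrades the average estimate to a pointwise lower bound $h_{H}(f^{n}(x))\geq C_{1}n^{l}\alpha_{f}(x)^{n}$ for $n\gg 0$, with the initial iterates absorbed into the constants.

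The main obstacle is this last non-degeneracy step: ruling out that the trigonometric polynomial $P_{i_{0},H}(n,x)$ collapses along an infinite subsequence of $n$. The key input is that the quantity defining $\alpha_{f}(x)$ is an honest limit rather than merely a limsup, which forces the dominant Jordan component to be governed asymptotically by a polynomial in $n$ of genuine degree $l$; any oscillatory zeros of the leading coefficient along the orbit would push the liminf of $\max\{1,h_{H}(f^{n}(x))\}^{1/n}$ strictly below $\mu_{i_{0}}$ and contradict the existence of this limit. Once this spectral rigidity is in hand, the matching upper and lower bounds fall out with the same exponent $l$ and with positive constants $C_{1},C_{2}$.
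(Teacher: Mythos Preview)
The paper does not supply its own proof of this proposition; it is quoted as a black box from \cite{sano18}. Your overall plan---Jordan decomposition of $f^{*}$ on $N^{1}(X)_{\R}$, expansion of $h_{H}(f^{n}(x))$ as a linear-recurrence-type sum, and identification of $\alpha_{f}(x)$ with the dominant modulus---is indeed the framework underlying the Kawaguchi--Silverman and Sano arguments, and the upper bound follows essentially as you say (the iterated $O(1)$ errors accumulate to $O(n)$, not $O(1)$, but this is harmless since $\alpha_{f}(x)>1$).

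The lower bound, however, has a genuine gap. Your key claim is that if the leading trigonometric polynomial $P_{i_{0},H}(n,x)$ degenerated along a subsequence, then $\liminf_{n}\max\{1,h_{H}(f^{n}(x))\}^{1/n}$ would drop strictly below $\mu_{i_{0}}$, contradicting the existence of the limit. This inference fails. Consider a leading coefficient of the shape $c(n)=1+\cos(n\theta)$ with $\theta/\pi$ irrational: then $c(n)\geq 0$, $\limsup c(n)=2$, and $c(n)$ comes arbitrarily close to $0$ infinitely often, yet $c(n)^{1/n}\to 1$ because the decay along that subsequence is sub-exponential. In such a scenario $h_{H}(f^{n}(x))^{1/n}\to\mu_{i_{0}}$ would still hold, while the pointwise bound $h_{H}(f^{n}(x))\geq C_{1}n^{l}\mu_{i_{0}}^{n}$ with a fixed $C_{1}>0$ would fail. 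Neither the global lower bound $h_{H}\geq -C$ coming from ampleness nor an unspecified ``summation-by-parts across a window'' converts a subsequence estimate into a uniform strictly positive lower bound on the leading coefficient.

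What the actual argument in \cite{ks1,sano18} uses is stronger structural input: one builds canonical heights $\hat h_{D}$ attached to nef $\R$-divisor classes lying in (generalized) eigenspaces for \emph{real} eigenvalues $\lambda>1$, so that $\hat h_{D}\circ f=\lambda\,\hat h_{D}$ holds exactly and $\hat h_{D}\geq 0$. One then shows that when $\alpha_{f}(x)>1$ the dominant contribution to $h_{H}$ along the orbit is governed by such a nef Jordan block with $\hat h_{D}(x)>0$. It is this positivity---not the mere existence of the limit defining $\alpha_{f}(x)$---that forces the dominant eigenvalue to be real and the leading coefficient to be a genuine positive constant rather than an oscillating quantity. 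Without that step (or an equivalent replacement) the lower half of the two-sided estimate remains unproven.
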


Let $f \colon X \longrightarrow X$ be a surjective morphism on a smooth projective geometrically irreducible variety $X$ over $K$.
Let $Y \subset X$ be a proper closed subscheme.
We introduce an invariant $e(Y)$ which measures how $f$ is ramified along the iterated preimages of $Y$: 
\[
e(Y)=\lim_{n\to \infty}  \sup \{ e_{f^{n}}(x) \mid f^{n}(x) \in Y \}^{1/n} .
\]
See \S \ref{sec:invariant-e} for details of this invariant.

We prove:

\begin{thm}[\cref{thm:vojta-hYvshH}]\label{thm:intro:hYvshH}
Let $Y \subset X$ be a closed subscheme of codimension at least two.
Let $x \in X(K)$ and suppose $e(Y) < \alpha_{f}(x)$.

Assume Vojta's conjecture (\cref{conj:vojta}).
Fix ample height function $h_{H}$ on $X$ and height $h_{Y}$ associated with $Y$.
Then for any $\e>0$, there is a proper closed subset $Z_{\e} \subset X$ such that
\begin{align*}
\left\{ f^{n}(x)\  \middle|\  \frac{h_{Y}(f^{n}(x))}{h_{H}(f^{n}(x))} \geq \e  \right\} \subset Z_{\e}.
\end{align*}
In particular, if $O_{f}(x)$ is generic (i.e. its intersection with every proper closed subset is finite), then we have
\begin{align*}
\lim_{n \to \infty }\frac{h_{Y}(f^{n}(x))}{h_{H}(f^{n}(x))}  = 0.
\end{align*}

\end{thm}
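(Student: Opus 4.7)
The plan is to pull $Y$ back by $f^{N}$ for a large but fixed $N$, absorb the ramification along the iterated preimage into the invariant $e(Y)$, apply the codimension-$\geq 2$ consequence of Vojta's conjecture on a log resolution of the preimage, and finally use \cref{prop:growthht} to convert a bound at $f^{n-N}(x)$ into a bound at $f^{n}(x)$. To set up, fix $\delta > 0$ with $e(Y)+\delta < \alpha_{f}(x)$; by the definition of $e(Y)$, choose $N$ large enough that
\[
e_{\max}(f^{N},Y) := \sup\{\,e_{f^{N}}(z) \mid f^{N}(z) \in Y\,\} \leq (e(Y)+\delta)^{N}.
\]
Let $W_{N} := (f^{N})^{-1}(Y)$ be the scheme-theoretic preimage and $W_{N}^{\mathrm{red}}$ its reduction. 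Two standard inputs: functoriality of subscheme heights gives $h_{W_{N}}(z) = h_{Y}(f^{N}(z)) + O(1)$; the ramification inclusion $I_{W_{N}^{\mathrm{red}}}^{\,e_{\max}(f^{N},Y)} \subset I_{W_{N}}$ (together with the fact that heights scale with multiplicity of the defining ideal and that subscheme containment reverses the height inequality) gives $h_{W_{N}}(z) \leq e_{\max}(f^{N},Y)\cdot h_{W_{N}^{\mathrm{red}}}(z) + O(1)$. Combining, for every $n \geq N$,
\[
h_{Y}(f^{n}(x)) \leq (e(Y)+\delta)^{N}\cdot h_{W_{N}^{\mathrm{red}}}(f^{n-N}(x)) + O_{N}(1).
\]

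Since $f^{N}$ is a surjective morphism between smooth varieties of the same dimension, $W_{N}^{\mathrm{red}}$ has codimension $\geq 2$ in $X$. Take a log resolution $\pi \colon \tilde X \to X$ of $W_{N}^{\mathrm{red}}$, let $E$ be the Cartier preimage, and pick an SNC divisor $D$ on $\tilde X$ containing $\Supp(E)$. Apply Vojta's conjecture on $\tilde X$ with divisor $D$ and a big-and-nef class comparable to $\pi^{*}H$. Every exceptional prime divisor of $\pi$ sitting over $W_{N}^{\mathrm{red}}$ appears in $K_{\tilde X}-\pi^{*}K_{X}$ with discrepancy $\geq 1$ (because $X$ is smooth and the center has codimension $\geq 2$), so the $K_{\tilde X}$ contribution in Vojta's inequality can be arranged to absorb the coefficients with which these divisors appear in $E$. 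Descending back to $X$ yields
\[
h_{W_{N}^{\mathrm{red}}}(z) \leq \e'\, h_{H}(z) + O(1)
\]
for $z \in X(K)$ outside a proper closed subset $Z_{N,\e'} \subsetneq X$, for any prescribed $\e' > 0$.

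Finally, \cref{prop:growthht} (applicable since $\alpha_{f}(x) > e(Y) \geq 1$) gives $h_{H}(f^{n-N}(x)) \leq C\,\alpha_{f}(x)^{-N} h_{H}(f^{n}(x))$ for all sufficiently large $n$, with $C$ independent of $N$. Choose $\e'$ so small that $(e(Y)+\delta)^{N}\cdot C\,\alpha_{f}(x)^{-N}\cdot \e' < \e/2$; since $h_{H}(f^{n}(x)) \to \infty$, the $O_{N}(1)$ term drops below $(\e/2) h_{H}(f^{n}(x))$ once $n$ exceeds some $n_{0}$. Therefore $h_{Y}(f^{n}(x))/h_{H}(f^{n}(x)) < \e$ whenever $n \geq n_{0}$ and $f^{n-N}(x) \notin Z_{N,\e'}$, so
\[
Z_{\e} := \overline{f^{N}(Z_{N,\e'})} \cup \{\,f^{n}(x) \mid n < n_{0}\,\}
\]
is the required proper closed subset; its properness uses $\dim Z_{N,\e'} < \dim X$ and that the second summand is finite.

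The main obstacle I expect is the descent step in the Vojta application: extracting the clean codimension-$\geq 2$ bound $h_{W_{N}^{\mathrm{red}}}(z) \leq \e' h_{H}(z) + O(1)$ from the form of Vojta's conjecture available on $\tilde X$ requires careful bookkeeping to match the coefficients of the exceptional divisors in $E$ against the discrepancies appearing in $K_{\tilde X}-\pi^{*}K_{X}$, and to compare $h_{\pi^{*}H}$ with an ample height on $\tilde X$. The hypothesis $\codim Y \geq 2$ is precisely what forces each relevant discrepancy to be $\geq 1$, and this is what makes the estimate go through.
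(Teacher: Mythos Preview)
Your overall architecture --- pull back by $f^{N}$, invoke Vojta on a log resolution of the preimage, and use \cref{prop:growthht} to trade $f^{n-N}(x)$ for $f^{n}(x)$ --- matches the paper's. The proof breaks, however, at exactly the point you flagged: the descent bound
\[
h_{W_{N}^{\mathrm{red}}}(z)\ \leq\ \e'\,h_{H}(z)+O(1)\qquad\text{for every }\e'>0
\]
is simply false. Your justification is that exceptional discrepancies over a codimension-$\geq 2$ centre satisfy $a_{i}\geq 1$; but to ``absorb the coefficients with which these divisors appear in $E$'' you need $a_{i}\geq b_{i}$ (equivalently $\ct(X,W_{N}^{\mathrm{red}})\geq 1$), and that fails already for $W=\{\text{point}\}\subset\P^{2}$: the points $(N:N:1)$ are Zariski dense and have $h_{W}/h_{H}\to 1$, so no inequality $h_{W}\leq \e' h_{H}+O(1)$ with small $\e'$ can hold off a proper closed set. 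What Vojta actually gives (this is \cref{prop:vojta-ctversion}) is
\[
\ct(X,W)\,h_{W}(z)\ \leq\ \e' h_{H}(z)\ -\ h_{K_{X}}(z)\ +\ O(1),
\]
and neither $1/\ct(X,W)$ nor $-h_{K_{X}}$ can be made small by shrinking $\e'$. Your final step fixes $N$ first and then sends $\e'\to 0$; with the correct Vojta output that order of quantifiers cannot work.

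The paper repairs this in two moves. First, it drops the passage through $W_{N}^{\mathrm{red}}$ entirely (your ideal inclusion $I_{W_{N}^{\mathrm{red}}}^{\,e_{\max}}\subset I_{W_{N}}$ is neither needed nor clearly true when $\dim Y>0$, since $e_{f^{N}}(z)$ measures the fibre over $f^{N}(z)$, not the ideal $I_{W_{N},z}=f^{N*}I_{Y}\cdot\O_{X,z}$). Instead it applies \cref{prop:vojta-ctversion} directly to the non-reduced scheme $f^{-k}(Y)$, keeping the $h_{H-K_{X}}$ term. Second, it uses the quantitative input $\lct(X,f^{-k}(Y))\geq (e(Y)+\e')^{-k}$ together with \cref{lem:ct-vs-lct} to get $1/\ct(X,f^{-k}(Y))\leq 2(e(Y)+\e')^{k}$. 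With Vojta applied for a \emph{fixed} small parameter, this yields
\[
h_{Y}(f^{n}(x))\ \leq\ 2(e(Y)+\e')^{k}\,h_{H-K_{X}}(f^{n-k}(x))+O_{k}(1),
\]
and now one lets $k\to\infty$ (not $\e'\to 0$): the factor $2(e(Y)+\e')^{k}\alpha_{f}(x)^{-k}$ from \cref{prop:growthht} goes to $0$ precisely because $e(Y)<\alpha_{f}(x)$. In short, the smallness has to come from the canonical threshold of the \emph{iterated} preimage, not from the $\e'$ in Vojta's inequality.
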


For comparison, let us recall a theorem from \cite{ma20}.

\begin{thm}[cf. {\cite[Theorem 1.13]{ma20}}]
Let $Y \subset X$ be a closed subscheme (which could be a divisor).
Let $x \in X(K)$ and suppose $e(Y) < \alpha_{f}(x)$.

Assume Vojta's conjecture (\cref{conj:vojta}).
Fix ample height function $h_{H}$ on $X$ and local height $\l_{Y}$ associated with $Y$.
Let $v$ be a place of $K$.
Then for any $\e>0$, there is a proper closed subset $Z_{\e} \subset X$ such that
\begin{align*}
\left\{ f^{n}(x)\  \middle|\  \frac{\l_{Y,v}(f^{n}(x))}{h_{H}(f^{n}(x))} \geq \e  \right\} \subset Z_{\e}.
\end{align*}
In particular, if $O_{f}(x)$ is generic, then we have
\begin{align*}
\lim_{n \to \infty }\frac{\l_{Y,v}(f^{n}(x))}{h_{H}(f^{n}(x))}  = 0.
\end{align*}

\end{thm}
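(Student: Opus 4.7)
The plan is to apply Vojta's conjecture once, to a large iterate $f^{n_{0}}$, and use functoriality of local heights to convert the resulting bound into a statement valid for all $n \ge n_{0}$. The hypothesis $e(Y) < \a_{f}(x)$ is what allows the exponential decay $((e(Y)+\d)/\a_{f}(x))^{n_{0}} \to 0$ to beat the ramification multiplicity of $f^{n_{0}}$ along $Y$.

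I would first reduce to the case of an SNC Cartier divisor by log-resolving $\I_{Y}$: if $\pi\colon\tilde X \to X$ is proper birational with $\pi^{-1}\I_{Y}=\O_{\tilde X}(-E)$, then $\l_{Y,v}(P)=\l_{E,v}(\tilde P)+O(1)$ for lifts $\tilde P$, and the problem transports to $(\tilde X, E)$ with the induced rational self-map (the extra indeterminacy locus is absorbed into $Z_{\e}$). Now fix $\d>0$ with $e(Y)+\d<\a_{f}(x)$ and $n_{0}$ to be chosen large. Applying Vojta's conjecture on $X$ to (a log resolution of) the reduced divisor $D_{n_{0}} := ((f^{n_{0}})^{-1}Y)_{\mathrm{red}}$ yields a proper closed $Z_{n_{0}}\subsetneq X$ and $\e_{0}>0$ such that for $P\in X(K)\setminus Z_{n_{0}}$,
\[
\l_{D_{n_{0}},v}(P) \le \e_{0}\,h_{H}(P) - h_{K_{X}}(P) + O(1) \le (C_{1}+\e_{0})\,h_{H}(P) + O(1),
\]
where $|h_{K_{X}}|\le C_{1}\,h_{H}+O(1)$. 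By the definition of $e(Y)$, for $n_{0}$ large the scheme-theoretic multiplicities of $(f^{n_{0}})^{*}Y$ along the components of $D_{n_{0}}$ are at most $(e(Y)+\d)^{n_{0}}$ (away from a lower-dimensional locus absorbed into $Z_{n_{0}}$), giving $\l_{(f^{n_{0}})^{*}Y,v} \le (e(Y)+\d)^{n_{0}}\,\l_{D_{n_{0}},v}+O_{n_{0}}(1)$. Combined with the functoriality $\l_{Y,v}(f^{n}(x))=\l_{(f^{n_{0}})^{*}Y,v}(f^{n-n_{0}}(x))+O_{n_{0}}(1)$, this yields
\[
\l_{Y,v}(f^{n}(x)) \le (e(Y)+\d)^{n_{0}}(C_{1}+\e_{0})\,h_{H}(f^{n-n_{0}}(x)) + O_{n_{0}}(1)
\]
whenever $f^{n-n_{0}}(x)\notin Z_{n_{0}}$.

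I would then divide by $h_{H}(f^{n}(x))$ and invoke \cref{prop:growthht}, which gives $h_{H}(f^{n-n_{0}}(x))/h_{H}(f^{n}(x)) \ll \a_{f}(x)^{-n_{0}}$ uniformly in $n$. The resulting ratio is
\[
\frac{\l_{Y,v}(f^{n}(x))}{h_{H}(f^{n}(x))} \le C_{2}\left(\frac{e(Y)+\d}{\a_{f}(x)}\right)^{\!n_{0}} + o_{n_{0}}(1),
\]
where $o_{n_{0}}(1)\to 0$ as $n\to\infty$ for fixed $n_{0}$. Since $e(Y)<\a_{f}(x)$, choosing $n_{0}$ large enough makes the main term smaller than $\e/2$. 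Finally, set $Z_{\e}:=\overline{f^{n_{0}}(Z_{n_{0}})}\cup\{x,f(x),\ldots,f^{n_{0}-1}(x)\}$, a proper closed subset of $X$ since $f^{n_{0}}$ is proper and $\dim Z_{n_{0}}<\dim X$. Any $n$ with the ratio $\ge\e$ must have either $n<n_{0}$, or $n\ge n_{0}$ with $f^{n-n_{0}}(x)\in Z_{n_{0}}$ and hence $f^{n}(x)\in f^{n_{0}}(Z_{n_{0}})$; in either case $f^{n}(x)\in Z_{\e}$.

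The main obstacle is the uniformity of the multiplicity estimate: $e(Y)$ is defined as a limsup of pointwise ramification indices, and extracting from it a uniform bound on the scheme-theoretic multiplicity of $(f^{n_{0}})^{*}Y$ along every component of $D_{n_{0}}$---necessarily excluding a lower-dimensional locus that is folded into $Z_{n_{0}}$---is the delicate step. The subsequent log resolution needed to put $D_{n_{0}}$ into SNC form before invoking Vojta, and the bookkeeping for how $\l_{D_{n_{0}},v}$ transforms under that blow-up, add a technical but routine layer.
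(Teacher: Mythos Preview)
The paper does not give its own proof here; it cites \cite[Theorem 1.13]{ma20}. The closest in-paper argument is the proof of \cref{thm:vojta-hYvshH}, which follows exactly the template you describe: pull back by $f^{k}$, bound the singularity of $(X,f^{-k}Y)$ in terms of $(e(Y)+\e')^{k}$, apply Vojta on a log resolution, and finish with functoriality and the growth estimate \cref{prop:growthht}. So your overall plan is the right one and matches the paper.

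Two points deserve comment. First, your opening reduction --- log-resolving $Y$ to a divisor $E$ on $\tilde X$ and replacing $f$ by the induced rational self-map --- is both unnecessary and genuinely problematic. The invariant $e(\,\cdot\,)$ is defined only for finite flat morphisms, so $e(E)$ for the rational lift is not available, and you would also need to control how $\a_{f}(x)$ and the orbit behave under this lift. The paper (and \cite{ma20}) never makes this move: it stays on $X$, works directly with the closed subscheme $f^{-k}(Y)$ (which need not be a divisor), and only passes to a log resolution \emph{inside} the single application of Vojta. In particular your symbol $D_{n_{0}}:=((f^{n_{0}})^{-1}Y)_{\mathrm{red}}$ is not a divisor on $X$ when $\codim Y\ge 2$, so the inequality $\l_{(f^{n_{0}})^{*}Y,v}\le (e(Y)+\d)^{n_{0}}\l_{D_{n_{0}},v}$ as written does not parse in that case.

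Second, the ``delicate step'' you flag --- converting the definition of $e(Y)$ into a uniform singularity bound for $(X,f^{-k}Y)$ --- is packaged in the paper as the single inequality
\[
\lct\bigl(X,\,f^{-k}(Y)\bigr)\ \ge\ \frac{1}{(e(Y)+\e')^{k}}\qquad (k\gg 0),
\]
quoted from \cite[Corollary 7.13]{ma20}. This is strictly stronger than your ``multiplicities along components of $D_{n_{0}}$'' statement, because it controls the coefficients that appear on \emph{any} log resolution, not just at generic points on $X$; that extra strength is exactly what lets Vojta be applied cleanly with $h_{K_{X}}$ (rather than $h_{K_{\tilde X}}$) on the right-hand side. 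Once you invoke this lct bound, your proof and the paper's collapse to the same argument.
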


\begin{rmk}
In \cite{ma20}, the theorem stated only for generic orbits but the same proof works for not necessarily generic orbits.
Also, we do not use $e(Y)$ in \cite{ma20}, but the conditions in the above theorem and the ones in \cite[Theorem 1.13]{ma20} 
are equivalent.
\end{rmk}

Based on these theorems, we formulate the following conjecture.

\begin{conj}[cf.\ \cref{conj:subvarht-vs-ampleht}]\label{conj:intr:main}
Let $X$ be a smooth projective geometrically irreducible variety over a number field $K$.
Let $f \colon X \longrightarrow X$ be a surjective morphism.
Let $Y \subset X$ be a proper closed subscheme.
Let $H$ be an ample divisor on $X$.
Fix global height functions $h_{Y}$ and $h_{H}$, and local height function $\l_{Y}$.
Let $v$ be a place of $K$.
Let $x \in X( \overline{K})$ be a point such that $e(Y) < \alpha_{f}(x)$ and $O_{f}(x)$ is Zariski dense.
Then we have

\begin{align*}
\lim_{n \to \infty }\frac{\l_{Y,v}(f^{n}(x))}{h_{H}(f^{n}(x))}  = 0.
\end{align*}
If further $Y$ has codimension at least two, we have
\begin{align*}
\lim_{n \to \infty} \frac{h_{Y}(f^{n}(x))}{h_{H}(f^{n}(x))} = 0.
\end{align*}
\end{conj}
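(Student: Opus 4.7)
The plan is to reduce the Zariski-dense hypothesis of \cref{conj:intr:main} to the ``generic orbit'' hypothesis of \cref{thm:intro:hYvshH} and its local counterpart via a Dynamical Mordell-Lang step, and then to induct on $\dim X$. Applying \cref{thm:intro:hYvshH} (for the global height, when $\codim Y \geq 2$) and the theorem cited from \cite{ma20} (for the local height, for arbitrary $Y$) produces, for each $\varepsilon > 0$, a proper closed subset $Z_{\varepsilon} \subset X$ outside of which the ratio in question is $<\varepsilon$. If $O_{f}(x)$ is generic we are done immediately; under the weaker assumption of Zariski density, however, the bad set $N_{\varepsilon} = \{n \mid f^{n}(x) \in Z_{\varepsilon}\}$ may still be infinite, so further input is needed.

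Assuming the Dynamical Mordell-Lang conjecture, $N_{\varepsilon}$ is a finite union of arithmetic progressions. Fix an infinite progression $\{r + pk \mid k \geq 0\}$ and let $W$ be the Zariski closure of $\{f^{r+pk}(x) \mid k \geq 0\}$; then $W \subsetneq X$ is an irreducible proper subvariety with $f^{p}(W) \subseteq W$, and $f^{r}(x)$ has Zariski-dense $f^{p}|_{W}$-orbit. Passing to a resolution $\pi \colon \widetilde{W} \to W$ on which $f^{p}$ lifts (after replacing $p$ by a multiple if necessary), the arithmetic degree scales as $\alpha_{f^{p}}(f^{r}(x)) = \alpha_{f}(x)^{p}$, and the ramification invariant is expected to satisfy $e(\pi^{-1}(Y \cap W)) \leq e(Y)^{p}$, so the strict inequality $e(Y) < \alpha_{f}(x)$ is preserved on the restricted system. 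Inducting on $\dim X$ --- the base case $\dim X = 1$ reducing to \cite{ma20} since any proper subscheme of a curve is zero-dimensional --- the inductive hypothesis applied to $(\widetilde{W}, f^{p}|_{\widetilde{W}}, \pi^{-1}(Y \cap W), f^{r}(x))$ contradicts $W \subseteq Z_{\varepsilon}$ for small $\varepsilon$.

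The main obstacle is the global-height case: $Y \cap W$ may drop to codimension one in $W$ even when $\codim_{X} Y \geq 2$, so that the global part of \cref{conj:intr:main} no longer applies to the restricted system. To handle this, one would have to split $Y \cap W = D + R$ into its divisorial and higher-codimension parts, bound $h_{D}$ by summing the local-height estimates over places --- an issue that does not arise for a single place $v$ and that requires extra uniformity in $v$ --- and handle $h_{R}$ by induction. A secondary difficulty is verifying the asserted behavior of $e$ under restriction to and resolution of an invariant subvariety, which demands careful local intersection-theoretic analysis and may impose further compatibility conditions on the lift $f^{p}|_{\widetilde{W}}$.
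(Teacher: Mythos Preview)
The statement you are trying to prove is a \emph{conjecture}: the paper does not prove it, and presents it as open. What the paper does establish is the conditional result \cref{thm:intro:hYvshH} (and its local analogue from \cite{ma20}), which assumes Vojta's conjecture and requires the orbit to be \emph{generic} rather than merely Zariski dense. The paper then remarks explicitly that, assuming Dynamical Mordell--Lang, a Zariski-dense orbit is automatically generic; so under Vojta $+$ DML the conjecture follows immediately from \cref{thm:intro:hYvshH} with no further work.

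Your proposal also assumes Vojta (via \cref{thm:intro:hYvshH}) and DML, so it cannot yield anything stronger than this same conditional implication. But your route is far more circuitous than necessary. Once you invoke DML to say that $N_{\varepsilon}=\{n:f^{n}(x)\in Z_{\varepsilon}\}$ is a finite union of arithmetic progressions, you should simply observe that no \emph{infinite} progression can occur: if $\{f^{r+pk}(x):k\geq 0\}\subset Z_{\varepsilon}$, then its closure $W$ is a proper $f^{p}$-invariant subvariety, and $O_{f}(x)\subset\{x,\dots,f^{r-1}(x)\}\cup\bigcup_{j=0}^{p-1}f^{j}(W)$ is contained in a proper closed set, contradicting Zariski density. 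Hence $N_{\varepsilon}$ is finite, the orbit is generic, and \cref{thm:intro:hYvshH} applies directly. The entire inductive apparatus---restricting to $W$, resolving, lifting $f^{p}$, controlling $e$ under restriction, and worrying about $Y\cap W$ becoming a divisor---is unnecessary. The obstacles you identify in your last paragraph are genuine difficulties \emph{for your approach}, but they evaporate once you notice that DML already forces genericity.

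In short: there is no proof in the paper to compare against, and your conditional argument, while not wrong in spirit, overlooks the one-line reduction that makes the induction superfluous.
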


See \cref{Q:ht-ratio-higher-dyn.deg} for a possible weakening of the assumption.

Let us remark that when $X$ is an abelian variety,
the first part of \cref{conj:intr:main} is a direct corollary of Faltings' theorem 
(and in this case we do not need the assumption $e(Y) < \alpha_{f}(x)$).

\begin{prop}
Let $X$ be an abelian variety over $K$.
Let $f \colon X \longrightarrow X$ be a surjective morphism and let $Y \subset X$ be a proper closed subscheme.
Let $h_{H}$ be an ample height function on $X$.
Then for every place $v$ of $K$ and every point $x \in X(K)$ with Zariski dense $f$-orbit, we have
\begin{align*}
\lim_{n \to \infty}\frac{\l_{Y,v}(f^{n}(x))}{h_{H}(f^{n}(x))}=0.
\end{align*}
\end{prop}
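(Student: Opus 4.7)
The plan is to combine Faltings' theorem on Diophantine approximation on abelian varieties with the Dynamical Mordell--Lang theorem (known for étale endomorphisms by Ghioca--Tucker) and an orbit-density argument. First, since $\l_{Y,v}(P) \le \sum_{i} \l_{Y_i,v}(P) + O(1)$, where $Y_i$ are the irreducible components of $Y_{\mathrm{red}}$, we may assume $Y \subsetneq X$ is an integral closed subvariety. Faltings' Diophantine approximation theorem on abelian varieties states that for every $\e > 0$ there is a proper Zariski closed subset $Z_\e \subsetneq X$, namely a finite union of translates of proper abelian subvarieties contained in $Y$, such that $\l_{Y,v}(P) \le \e\, h_H(P)$ for all but finitely many $P \in X(K) \setminus Z_\e$. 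In particular, the inequality $\l_{Y,v}(f^n(x)) \ge \e\, h_H(f^n(x))$ can hold only when $f^n(x) \in Z_\e$, up to finitely many exceptional $n$.

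Writing $f = t_a \circ \varphi$ with $\varphi$ an isogeny and $t_a$ a translation by $a \in X$, we see that $f$ is étale, so the Dynamical Mordell--Lang theorem of Ghioca--Tucker applies and the return set $\{n \ge 0 : f^n(x) \in Z_\e\}$ is a finite union of arithmetic progressions. I claim this set is in fact finite. Suppose it contained an infinite progression $\{a + bm : m \ge 0\}$; then $O_{f^b}(f^a(x)) = f^a(O_{f^b}(x)) \subseteq Z_\e$. From the decomposition $O_f(x) = \bigcup_{i=0}^{b-1} f^i(O_{f^b}(x))$ together with Zariski density of $O_f(x)$ and irreducibility of $X$, one of the closures $\overline{f^{i_0}(O_{f^b}(x))} = f^{i_0}(\overline{O_{f^b}(x)})$ must equal $X$, and since $f^{i_0}$ is a finite surjective self-morphism of $X$, dimension comparison then forces $\overline{O_{f^b}(x)} = X$. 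Applying $f^a$ yields $\overline{O_{f^b}(f^a(x))} = f^a(X) = X$, contradicting $Z_\e \subsetneq X$. Hence only finitely many $n$ satisfy $\l_{Y,v}(f^n(x)) \ge \e\, h_H(f^n(x))$.

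Finally, the orbit $O_f(x) \subset X(K)$ is infinite (since $\dim X \ge 1$ and $O_f(x)$ is Zariski dense), so Northcott's theorem forces $h_H(f^n(x)) \to \infty$. Combined with the fact that $\l_{Y,v}$ is bounded below up to $O(1)$, we conclude $\limsup_{n} \l_{Y,v}(f^n(x))/h_H(f^n(x)) \le \e$ for every $\e > 0$, and hence the limit is $0$. I anticipate the main obstacle to be the orbit-density step that rules out infinite arithmetic progressions in the return set, and more generally locating a sufficiently strong formulation of Faltings' Diophantine approximation theorem in the literature; once these are in place, Faltings' theorem and the Ghioca--Tucker dynamical Mordell--Lang theorem combine cleanly, and the assumption $e(Y) < \alpha_f(x)$ from \cref{thm:intro:hYvshH} is not needed because the exceptional set from Faltings is already a union of translates of abelian subvarieties.
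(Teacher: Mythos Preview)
Your proof is correct and follows exactly the approach the paper indicates: Faltings' Diophantine approximation theorem \cite[Theorem 2]{fal91} together with the Dynamical Mordell--Lang theorem for \'etale endomorphisms. The paper's own proof is a single sentence citing these two ingredients; you have simply unpacked the details (reduction to integral $Y$, the orbit-density argument showing the return set to $Z_\e$ is finite, and the Northcott step forcing $h_H(f^n(x))\to\infty$), all of which are standard and correctly carried out.
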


\begin{proof}
This follows from \cite[Theorem 2]{fal91} and Dynamical Mordell-Lang conjecture for $f$, which is \'etale.
\end{proof}

In \S \ref{sec:ppd}, we apply \cref{thm:intro:hYvshH} to the problem of existence of primitive prime divisors.

\begin{thm}[\cref{thm:ppd2}]
Let $X, f$ be as above.
Let $D$ be an effective ample divisor on $X$.
Take/fix
\begin{itemize} 
\item local height function $\l_{D}$  associated with $D$;
\item a finite set of places $S$ of $K$ containing all archimedean ones.
\end{itemize}
Suppose $D \cap f^{-i}(D)$ has codimension two for every $i \geq 1$.
Let $x \in X(K)$ be a point such that $ \alpha_{f}(x) > e(D)$.
Assume Vojta's conjecture with truncated counting function (\cref{conj:vojtawithcounting}).
Then there is a proper closed subset $Z \subset X$ such that
\begin{align*}
\left\{ f^{n}(x)\  \middle|\  \txt{$f^{n}(x)$ does not have\\ a primitive prime divisor outside $S$ w.r.t $\l_{D}$}  \right\} \subset Z.
\end{align*}
(see \cref{def:ppd} for the definition of primitive prime divisors in this setting.)
\end{thm}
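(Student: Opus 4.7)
The plan is proof by contradiction. Suppose $f^n(x)$ has no primitive prime divisor outside $S$ with respect to $\lambda_D$; then every place $v \notin S$ with $\lambda_{D,v}(f^n(x)) > 0$ already has $\lambda_{D,v}(f^m(x)) > 0$ for some $m < n$. Writing $Y_i := D \cap f^{-i}(D)$ and setting $i := n-m$, such a $v$ contributes to $\lambda_{Y_i,v}(f^{n-i}(x))$, and hence
\[
N^{(1)}_S(D, f^n(x)) \;\leq\; \sum_{i=1}^{n} N^{(1)}_S(Y_i, f^{n-i}(x)) \;\leq\; \sum_{i=1}^n h_{Y_i}(f^{n-i}(x)) + O(n).
\]
By hypothesis each $Y_i$ has codimension at least two, and $Y_i \subset D$ yields $e(Y_i) \leq e(D) < \alpha_f(x)$, so \cref{thm:intro:hYvshH} applies to each $Y_i$.

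The second ingredient is Vojta's conjecture with truncated counting (\cref{conj:vojtawithcounting}) applied to $D$: for any $\epsilon > 0$ there is a proper closed $Z_V \subset X$ outside which
\[
c_0 h_H(f^n(x)) - O(1) \;\leq\; h_D(f^n(x)) \;\leq\; N^{(1)}_S(D, f^n(x)) + \epsilon h_H(f^n(x)) + O(1),
\]
with $c_0 > 0$ from ampleness of $D$. If the statement of Vojta is phrased in terms of $h_{K_X+D}$, I would replace $D$ by a large multiple $mD$ so that $K_X + mD$ is ample; the truncated counting function is insensitive to this rescaling of a reduced effective divisor.

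The crucial technical step is to control $\sum_{i=1}^n h_{Y_i}(f^{n-i}(x))$ uniformly in $i$. I would split the sum at a threshold $N$ chosen so that $\alpha_f(x)^{-N}$ is as small as needed. For $i \leq N$, \cref{thm:intro:hYvshH} provides a proper closed $Z_{i,\epsilon} \subset X$ off of which $h_{Y_i}(f^{n-i}(x)) \leq \epsilon h_H(f^{n-i}(x))$. For $i > N$, use $Y_i \subset D$ to get the trivial bound $h_{Y_i} \leq h_D + O(1) \leq C h_H + O(1)$, then invoke \cref{prop:growthht} to write $h_H(f^{n-i}(x)) \leq C_2 (n-i)^l \alpha_f(x)^{n-i}$ and collapse the tail as a geometric series of total mass $O(\alpha_f(x)^{-N}) \cdot h_H(f^n(x))$. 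A joint choice of $\epsilon$ and $N$ produces $\sum_i h_{Y_i}(f^{n-i}(x)) \leq \epsilon' h_H(f^n(x)) + O(n)$ for any prescribed $\epsilon' > 0$.

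Setting $Z := Z_V \cup \bigcup_{i=1}^{N} f^{i}(Z_{i,\epsilon})$, which is a proper closed subset since each $Z_{i,\epsilon}$ has positive codimension and $f^i$ is a morphism, any $f^n(x) \notin Z$ satisfies both estimates simultaneously, producing
\[
c_0 h_H(f^n(x)) \;\leq\; (\epsilon + \epsilon') h_H(f^n(x)) + O(n),
\]
a contradiction once $\epsilon + \epsilon' < c_0$ and $n$ is large; the finitely many small-$n$ exceptional orbit points can be absorbed into $Z$. The main obstacle is precisely the uniformity-in-$i$ argument, where one must balance the small-$i$ bounds from \cref{thm:intro:hYvshH} against the geometric-series tail using the exact growth rate of $h_H(f^n(x))$; a secondary subtlety is extracting the clean $h_D$ form from \cref{conj:vojtawithcounting} when the conjecture is stated in terms of $h_{K_X+D}$.
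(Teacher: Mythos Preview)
Your decomposition of the non-primitive contribution is essentially the paper's: split at a threshold $l$, control the recent piece $\sum_{i\le l}h_{Y_i}(f^{n-i}(x))$ via \cref{thm:intro:hYvshH}, and dominate the tail $\sum_{m\le n-l-1}h_D(f^m(x))$ by a geometric series using \cref{prop:growthht}. That part is fine.

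The gap is in your lower bound for $N_S^{(1)}(D,f^n(x))$. Applying \cref{conj:vojtawithcounting} directly to $D$ gives only
\[
N_S^{(1)}(D,f^n(x))\;\ge\;h_{K_X+D}(f^n(x))-\e h_H(f^n(x)),
\]
which is useless whenever $K_X+D$ is not big; for instance on $\P^2$ with $D$ a line one has $K_X+D\sim -2H$ and the right-hand side is negative. Your proposed remedy of passing to $mD$ does not help: Vojta's conjecture requires an SNC (in particular reduced) divisor, and if instead you route through \cref{prop:vojta-truncated-lct} for $Y=mD$ you get $\lct(X,mD)\,h_{mD}=\lct(X,D)\,h_D$, recovering the same inequality with no gain. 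So there is no way to manufacture a positive constant $c_0$ with $N_S^{(1)}(D,\cdot)\ge c_0 h_H(\cdot)$ from Vojta applied to $D$ alone.

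The paper's key extra idea is dynamical: apply \cref{prop:vojta-truncated-lct} to $(f^k)^*D$ rather than to $D$. Since $N_S^{(1)}(D,f^n(x))\ge N_S^{(1)}((f^k)^*D,f^{n-k}(x))-O(1)$ and $h_{(f^k)^*D}(f^{n-k}(x))=h_D(f^n(x))+O(1)$, one obtains
\[
N_S^{(1)}(D,f^n(x))\;\ge\;\lct\bigl(X,(f^k)^*D\bigr)\,h_D(f^n(x))+h_{K_X}(f^{n-k}(x))-h_H(f^{n-k}(x))-O(1).
\]
The hypothesis $e(D)<\alpha_f(x)$ enters precisely here: it forces $\lct(X,(f^k)^*D)\ge(e(D)+\e)^{-k}$, while $h_{H-K_X}(f^{n-k}(x))\lesssim\alpha_f(x)^{-k}h_D(f^n(x))$. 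Choosing $k$ large makes the ratio $(\alpha_f(x)/(e(D)+\e))^{k}$ overwhelm the canonical term and yields a genuinely positive lower bound of the shape $c\,h_D(f^n(x))$ with $c>0$. Without this pullback step your argument cannot close, and this is exactly where the condition $\alpha_f(x)>e(D)$ is used.
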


The idea of the use of Vojta's conjecture with truncated counting function has already appeared in \cite{sil13}.
The idea of our proof is based on the proof of \cite[Theorem 2]{sil13}, but we improve it using 
the invariant $e(D)$ and the height estimate \cref{thm:intro:hYvshH}.

This can be also seen as a higher dimensional generalization of \cite{gnt13},
and height associated with subvarieties are involved as a new ingredient.

\vspace{10mm}

The organization of the paper is as follows.
In \S \ref{sec:htdef}, we review the definition of height functions.
In \S \ref{sec:vojta}, we recall Vojta's conjectures and reformulate it using invariants that measure the singularities of pairs $(X, Y)$,
namely canonical threshold and log canonical threshold.
In \S \ref{sec:invariant-e}, we introduce an invariant $e(Y)$ which measures how badly the self-morphism is ramified along the iterated 
preimages of $Y$. We use this invariant to exclude exceptional cases. 
(For example, if $f$ is a surjective self-morphism on $\P^{1}$ of degree $d\geq 2$
and $Y=\{y\}$ consists of a single point that is $f$-exceptional (i.e.\ the backward orbit of $y$ is finite),  then $e(Y)=d$.)
In \S \ref{sec:hY-vs-hH},  we discuss the limit of $h_{Y}/h_{H}$ along orbits and prove \cref{thm:intro:hYvshH}.
In \S \ref{sec:appl-to-dml}, we point out that  \cref{conj:intr:main} implies
Dynamical Mordell-Lang conjecture for $\P^{2}_{\QQ}$ (\cref{prop:DMLP2}).
In \S \ref{sec:ppd}, we apply Vojta's conjecture and \cref{thm:intro:hYvshH} to the problem of existence of primitive prime divisors.

\vspace{10mm}

\noindent
{\bf Convention}
In this paper, we work over a number field unless otherwise stated.
\begin{itemize}
\item An  \emph{algebraic scheme} over a field $k$ is a separated scheme of finite type over $k$;
\item A  \emph{variety} over a field $k$ is an algebraic scheme over $k$ which is irreducible and reduced;
\item A  \emph{nice variety} over a field $k$ is a smooth projective geometrically irreducible scheme over $k$;
\item Let $X$ be a scheme over a field $k$ and $k \subset k'$ be a field extension. 
 \emph{The base change} $X \times_{\Spec k}\Spec k'$ is denoted by $X_{k'}$.
For an ``object" $A$ on $X$, we sometimes use the notation $A_{k'}$ to express the base change of $A$ to $k'$ 
without mentioning to the definition of the base change if the meaning is clear.
\item For a closed subscheme $Y \subset X$, the  \emph{ideal sheaf defining $Y$} is denoted by $\I_{Y}$;
\item For a self-morphism $f \colon X \longrightarrow X$ of an algebraic scheme over $k$ and a 
point $x$ of $X$ (scheme point or $k'$-valued point where $k'$ is a field contains $k$), the  \emph{$f$-orbit of $x$}
is denoted by $O_{f}(x)$, i.e. $O_{f}(x) = \{ f^{n}(x) \mid n=0,1,2, \dots\}$.
\end{itemize}

\begin{ack}
The author would like to thank Joseph Silverman for discussing this subject with him and 
giving him many suggestions and valuable comments.
He would also like to thank Nicole Looper for answering his questions.
The author is supported by JSPS Overseas Research Fellowship.
He would also like to thank the department of mathematics at Brown University for hosting him during his 
fellowship.
\end{ack}

\section{Height functions}\label{sec:htdef}

We fix notation related to height functions.
See \cite{bg, Lan,hs} for the definitions and basic properties of absolute values and local/global height functions associated with divisors,
and see \cite{sil87} for local/global height associated with subschemes.

Let $K$ be a field with proper set of absolute values $M_{K}$. 
Let $M( \overline{K})$ be the set of absolute values on $ \overline{K}$ which extend absolute values of $M_{K}$.
For any intermediate field $K \subset L \subset \KK$ with $[L:K]<\infty$, 
let $M_{K}(L)=M(L)$ be the set of absolute values on $ L$ which extend absolute values of $M_{K}$.
Note that $M(L)$ is also a proper set of absolute values.

Let $X$ be a projective variety over $K$ and $ \overline{Y}\subset X_{\KK}$ be a proper closed subscheme.
We can equip $\overline{Y}$ with a function, which is called  \emph{a local height function associated with $\overline{Y}$}:
\begin{align*}
\lambda_{\overline{Y}} \colon (X_{ \overline{K}}\setminus Y)(\KK) \times M(\KK) \longrightarrow \R; (x,v) \mapsto \lambda_{\overline{Y}, v}(x).
\end{align*}
Note that this is determined up to $M_{K}$-bounded function.

Each $ \lambda_{\overline{Y},v}$ is a function on $(X_{ \overline{K}}\setminus \overline{Y})( \overline{K})$.
If we write  $\overline{Y}=D_{1}\cap \cdots \cap D_{r}$ for some effective Cartier divisors $D_{i}$ on $X_{ \overline{K}}$, then
\[
 \lambda_{\overline{Y},v} = \min_{1\leq i \leq r}\{ \lambda_{D_{i}, v} \} \quad \text{up to $M_{K}$-bounded function}
\]
where $ \lambda_{D_{i}, v}$ are the usual  (logarithmic) local heights associated with $D_{i}$.

When $\overline{Y}$ is also defined over $K$, let us denote by $Y$ the model over $K$.
In this case, we write $ \lambda_{\overline{Y}} = \lambda_{Y}$ and we can choose it so that the indicated map exists,
\[
\xymatrix{
(X \setminus Y)(L) \times M(\KK) \ar@{^{(}->}[r] \ar[d]_{\id \times (\ )|_{L}} & 
(X\setminus Y)(\KK) \times M(\KK) \ar[r]^(.80){ \lambda_{Y}} & \R \\
(X\setminus Y)(L) \times M(L) \ar@/_{10pt}/[rru]_{\exists} &&
}
\]
for any intermediate field $K \subset L \subset \KK$ with $[L:K]<\infty$.
The induced map $(X\setminus Y)(L)\times M(L) \longrightarrow \R$ is also denoted by $ \lambda_{Y}$:
the image of $(x,v) \in (X\setminus Y)(L)\times M(L)$ is denoted by $ \lambda_{Y,v}(x)$.
{\bf We always take $\l_{Y}$ so that these hold.}

A global height function $h_{Y} \colon (X \setminus Y)(\KK) \longrightarrow \R$
associated with $Y$ is defined by this choice of $ \lambda_{Y,v}$:
\begin{align}\label{eq:glhtY}
h_{Y}(x) = \frac{1}{[L:K]}\sum_{v \in M(L)} [L_{v}:K_{v|_{K}}] \lambda_{Y, v}(x)
\end{align}
for $x \in (X \setminus Y)(L)$.
When $x \in Y( \overline{K})$, we set $h_{Y}(x) = \infty$.

In this paper, 
{\bf when $K$ is a number field, $M_{K}$ is the set of absolute values that are normalized as in \cite[p11 (1.6)]{bg}. }
Namely, if $K=\Q$, then $M_{\Q}=\{|\ |_{p} \mid \text{$p=\infty$ or a prime number} \}$ with
\begin{align*}
 &|a|_{\infty} =
 \begin{cases}
 a \quad \text{if $a\geq0$}\\
 -a \quad \text{if $a<0$}
 \end{cases}
 \\
 & |a|_{p} = p^{-n} \quad \txt{if $p$ is a prime and  $a=p^{n}\frac{k}{l}$ where\\ $k,l$ are non zero integers coprime to $p$.}
\end{align*}
For a number field $K$, $M_{K}$ consists of the following absolute values:
\begin{align*}
|a|_{v} = |N_{K_{v}/\Q_{p}}(a)|_{p}^{1/[K:\Q]} 
\end{align*}
where $v$ is a place of $K$ which restricts to $p = \infty$ or a prime number.

For any intermediate field $K \subset L \subset \KK$ with $[L:K]<\infty$ and
$v \in M_{L}$, let $v_{0} \in M(L)$ be the absolute value that is equivalent to $v$
(i.e.\ $v$ is suitably normalized $v_{0}$).
We define 
\[
\l_{Y,v}:= \frac{[L_{v}: K_{v|_{K}}]}{[L:K]} \l_{Y, v_{0}}.
\]
(In other words, $\l_{Y,v}$ is a local height function defined using the normalized absolute value $v$.)
Under this notation, (\ref{eq:glhtY}) becomes
\begin{align*}
h_{Y}(x) = \sum_{v \in M_{L}} \lambda_{Y, v}(x).
\end{align*}

\section{Vojta's conjectures}\label{sec:vojta}

In this section, we recall Vojta's conjectures and reformulate them into forms which are suitable for our purpose.
We use it in the proof of \cref{thm:vojta-hYvshH,thm:ppd1,thm:ppd2}. 
Let $K$ be a number field.

\subsection{Vojta's conjecture}

\begin{defn}
Let $X$ be a smooth variety over $K$ and let $D$ be an effective Cartier divisor on $X$.
Let $x \in D$ be a scheme point.
We say $D$ has normal crossing at $x$ if there is an \'etale morphism $\f \colon U \longrightarrow X$ and a point $u \in U$
such that $\f(u) = x$ and satisfy the following:
\begin{itemize}
\item[$\star$] there is a regular system of parameters $f_{1},\dots ,f_{n}$ of $\O_{U,u}$ such that the ideal of $\f^{*}D$ at $u$ is generated by $f_{1}\cdots f_{r}$
for some $1 \leq r \leq n$.
\end{itemize}
If $D$ has normal crossing at every point $x \in D$, then we say $D$ has normal crossings or is a normal crossing divisor.

We say $D$ is simple normal crossing divisor (SNC for short) if every irreducible component of $D$ is smooth and
$D$ satisfies the above property $\star$ with $U = X$ at every $x$.

\end{defn}

\begin{rmk}
The followings are equivalent.
\begin{itemize}
\item $D$ is a normal crossing divisor on $X$;
\item $D_{ \overline{K}}$ is a normal crossing divisor on $X_{ \overline{K}}$;
\item for every closed point $x \in D_{ \overline{K}}$, there is a regular system of parameters $f_{1},\dots ,f_{n}$ of the completion 
$\widehat{\O}_{X_{ \overline{K}}, x}$ such that the image of the local equation of $D$ at $x$ is $f_{1}\cdots f_{r}$ for some $1 \leq r \leq n$.
\end{itemize}
\end{rmk}

\begin{defn}
Let $K$ be a number field and $S \subset M_{K}$ a finite subset containing all archimedean ones. 
Let $X$ be a geometrically irreducible projective variety over $K$ and let $Y \subset X$ be a proper closed subscheme.
Fix a local height $\l_{Y}$ associated with $Y$, and define
\[
N_{K, S}^{(1)}(Y, x) = N_{ S}^{(1)}(Y, x) := \sum_{v \notin S} \min\left\{ \l_{Y, v}(x), \log \sharp (\O_{K}/\p_{v})^{1/[K:\Q]}  \right\}
\] 
for $x \in (X \setminus Y)(K)$.
This is independent of the choice of $\l_{Y}$ up to bounded function.
\end{defn}

\begin{conj}[Vojta's conjecture with truncated counting function]\label{conj:vojtawithcounting}
Let $K$ be a number field and $S \subset M_{K}$ a finite subset containing all archimedean ones. 
Let $X$ be a nice variety over $K$ and let $D$ be a SNC divisor on  $X$.
Let $H$ be a big divisor on $X$ and $K_{X}$ a canonical divisor on $X$.
Fix $\l_{D}, h_{D},  h_{K_{X}}, h_{H}$.
Then for any $\e>0$ there is a proper Zariski closed subset $Z \subset X$ such that
\[
N_{K,S}^{(1)}(D, x) \geq h_{D}(x) + h_{K_{X}}(x) - \e h_{H}(x)
\]
for all $x \in (X \setminus Z)(K)$.
\end{conj}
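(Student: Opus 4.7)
The statement to prove is Vojta's conjecture strengthened by the truncated counting function, an open problem of extraordinary depth: already for $X=\P^{1}$ and $D=\{0,1,\infty\}$ it specializes to the abc conjecture, so any serious plan must explain where the truncation is produced. My strategy would be to separate the two sources of difficulty by first establishing the non-truncated Vojta inequality $h_{K_{X}+D}(x)\leq \e h_{H}(x)+O(1)$ outside a proper Zariski closed set, and then upgrading each local contribution $\l_{D,v}(x)$ to its truncated counterpart $\min\{\l_{D,v}(x),\ \log\sharp(\O_{K}/\p_{v})^{1/[K:\Q]}\}$.

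For the non-truncated step, I would follow Vojta's original strategy, working on a high Cartesian power $X^{n}$, constructing an auxiliary global section with controlled index along the partial diagonal via a Riemann--Roch plus Siegel-lemma argument, and then executing a Mumford-type index calculation that forces the small-height locus to lie in a proper Zariski closed subset. Faltings' proof of the Mordell conjecture, and the Faltings--W\"ustholz subspace theorem, realize this program for abelian varieties and for hyperplane configurations in $\P^{N}$. For general $(X,D)$ I would try to reduce to one of those settings, for instance by passing to the Albanese variety, or by embedding a tubular neighborhood of $D$ into $\G_{m}^{N}$ after a log resolution along $D$ and applying Schlickewei--Evertse subspace inequalities to the logarithmic coordinates.

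For the truncation step, the strategy would be to bound the defect
\[
\sum_{v\notin S} \max\bigl\{0,\ \l_{D,v}(x) - \log\sharp(\O_{K}/\p_{v})^{1/[K:\Q]}\bigr\}
\]
by the ``wild'' part of the valuation of the local parameters of $D$ along $x$, and to estimate this via an auxiliary morphism. A natural device is a Belyi-type cover $\pi\colon X\to \P^{1}$ ramified over three points, which reduces the desired estimate on $x$ to an abc-style bound on $\pi(x)$, at the price of expanding the exceptional set to absorb the new branching introduced by $\pi$ and of accounting for the pullback of SNC structure.

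\textbf{Main obstacle.} In every reduction of this type the truncation ultimately matches the full strength of the abc conjecture, which is itself unconditionally open. Thus the hard step is not the choice of $(X,D,H)$ but the mere presence of the truncation, and I expect any successful attack to require a genuinely new Diophantine input beyond the Schmidt--Evertse--Faltings--W\"ustholz toolkit; this is the obstruction I would expect to be decisive, and the reason the statement is included here as a conjecture rather than a theorem.
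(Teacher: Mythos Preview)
The statement is a \emph{conjecture}, not a theorem, and the paper offers no proof of it whatsoever: \cref{conj:vojtawithcounting} is simply stated and then used as a standing hypothesis in the results of \S\ref{sec:vojta} and \S\ref{sec:ppd}. So there is no ``paper's own proof'' to compare your proposal against.

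Your write-up correctly recognizes this. You identify the reduction to abc already at the level of $(\P^{1},\{0,1,\infty\})$, and you are right that the truncation is the entire difficulty: the non-truncated inequality is \cref{conj:vojta}, itself wide open in general, and the passage from \cref{conj:vojta} to \cref{conj:vojtawithcounting} is only known to go through if one assumes the stronger bounded-degree formulation (this is the content of \cite[Theorem 3.1]{vojta98}, cited in the paper's Remark following \cref{conj:vojta}). Your sketch of a Belyi-type reduction for the truncation step is reasonable as heuristics, but as you yourself say, it bottoms out at abc and therefore cannot succeed unconditionally with current tools.

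In short: there is no gap to name because there is no proof to give. Your final paragraph is the honest assessment, and it matches the paper's treatment of the statement as an assumption rather than a result.
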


This conjecture is apparently stronger than the following widely known form of Vojta's conjecture.

\begin{conj}[Vojta's conjecture]\label{conj:vojta}
Let $K$ be a number field and $S \subset M_{K}$ a finite subset. 
Let $X$ be a nice variety over $K$ and let $D$ be a SNC divisor on  $X$.
Let $H$ be a big divisor on $X$ and $K_{X}$ a canonical divisor on $X$.
Fix $\l_{D}, h_{K_{X}}, h_{H}$.
Then for any $\e>0$ there is a proper Zariski closed subset $Z \subset X$ such that
\[
\sum_{v \in S}\l_{D,v}(x) + h_{K_{X}}(x) \leq  \e h_{H}(x)
\]
for all $x \in (X \setminus Z)(K)$.
\end{conj}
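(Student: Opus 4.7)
\cref{conj:vojta} is the widely‑known Vojta conjecture, an outstanding open problem of Diophantine geometry; it is adopted in the present paper as a working hypothesis rather than proved. The only thing being asserted at this point in the text is that it is weaker than, hence implied by, the truncated version \cref{conj:vojtawithcounting} stated just above. I will sketch that implication, which is a brief formal manipulation.

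The plan is as follows. Fix data $(K, S, X, D, H, K_X)$ as in the statement, and any $\e > 0$. Because $D$ is effective we may choose the local heights $\l_{D,v}$ so that $\l_{D,v}(x) \geq 0$ for all non‑archimedean $v$, up to a uniformly bounded error. For $v \notin S$ we have the pointwise bound $\min\{\l_{D,v}(x),\,\log \sharp(\O_K/\p_v)^{1/[K:\Q]}\} \leq \l_{D,v}(x)$, so summing and using $h_D(x) = \sum_{v} \l_{D,v}(x) + O(1)$ yields
\[
N_{K,S}^{(1)}(D,x) \leq \sum_{v \notin S} \l_{D,v}(x) + O(1) = h_D(x) - \sum_{v \in S}\l_{D,v}(x) + O(1),
\]
equivalently, $\sum_{v \in S}\l_{D,v}(x) \leq h_D(x) - N_{K,S}^{(1)}(D,x) + O(1)$. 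Now apply \cref{conj:vojtawithcounting} to the same $(X, D, H, K_X)$ with the chosen $\e$: off a proper closed subset $Z \subset X$ one has $N_{K,S}^{(1)}(D,x) \geq h_D(x) + h_{K_X}(x) - \e h_H(x)$, i.e.\ $h_D(x) - N_{K,S}^{(1)}(D,x) \leq -h_{K_X}(x) + \e h_H(x)$. Substituting gives
\[
\sum_{v \in S}\l_{D,v}(x) + h_{K_X}(x) \leq \e h_H(x) + O(1)
\]
on $(X \setminus Z)(K)$. Since $H$ is big, the residual $O(1)$ is absorbed into a slightly larger $\e$ after possibly enlarging $Z$ by the proper closed subset on which $h_H$ is bounded, producing exactly the conclusion of \cref{conj:vojta}.

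The genuine obstacle here is, of course, not this formal implication but the problem of obtaining either conjecture unconditionally. The only unconditional cases known are essentially $\dim X = 1$ (equivalent to Faltings' theorem via Vojta's and Bombieri's diophantine‑approximation arguments), together with certain higher‑dimensional special situations accessible through the Schmidt subspace theorem, several of which are referenced elsewhere in this paper (e.g.\ the Bugeaud–Corvaja–Zannier results and the $\mathbb{G}_m^n$ cases). A general unconditional proof of \cref{conj:vojta} for $\dim X \geq 2$ would require a fundamentally new idea and is not attempted here; the statement is recorded precisely so that the conditional results of \S\S\,5–7 can cite it as input.
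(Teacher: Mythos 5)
You are right that \cref{conj:vojta} is an open conjecture, not a result the paper proves; the paper simply records it as a hypothesis for the later conditional theorems, and only asserts (without argument) that \cref{conj:vojtawithcounting} is ``apparently stronger.'' Your sketch of that implication is correct and is exactly the expected one: bound $N_{K,S}^{(1)}(D,x)$ above by $\sum_{v\notin S}\l_{D,v}(x)+O(1)=h_D(x)-\sum_{v\in S}\l_{D,v}(x)+O(1)$, feed in the lower bound from the truncated conjecture, cancel $h_D$, and absorb the $O(1)$ into $\e h_H$ using bigness of $H$. One small point worth noting: \cref{conj:vojta} allows $S$ to be an arbitrary finite set, whereas \cref{conj:vojtawithcounting} requires $S$ to contain all archimedean places; this is harmless since one can enlarge $S$ to such an $S'$, apply the truncated conjecture there, and use that $\sum_{v\in S'\setminus S}\l_{D,v}$ is bounded below (so $\sum_{v\in S}\l_{D,v}\leq\sum_{v\in S'}\l_{D,v}+O(1)$). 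With that caveat, your proposal matches what the paper intends and correctly refrains from claiming a proof of the conjecture itself.
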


\begin{rmk}
There are slightly generalized version of \cref{conj:vojtawithcounting,conj:vojta},
\cite[Conjecture 2.1, Conjecture 2.3]{vojta98}, which look at points with bounded degree
and take into account the effect of ramifications of extensions of residue fields. 
For this version, these two conjectures are actually equivalent \cite[Theorem 3.1]{vojta98}.
\end{rmk}

We deduce inequalities on height associated with subschemes from these Vojta's conjectures (\cref{prop:vojta-truncated-lct,prop:vojta-ctversion}).
The idea is the following.
For a closed subscheme $Y \subset X$, we take a projective birational morphism $\pi \colon \widetilde{X} \longrightarrow X$
from a smooth projective variety $ \widetilde{X}$
such that $\pi^{-1}(Y)$ is an effective Cartier divisor and its support as well as all exceptional divisors have simple normal crossings.
Take canonical divisors $K_{ \widetilde{ X}}$ and $K_{X}$ on $ \widetilde{ X}$ and $X$ respectively so that $\pi_{*}K_{ \widetilde{ X}}= K_{X}$.
Then define a divisor $ \Delta$ on $ \widetilde{X}$ by the following equation:
\begin{align*}
K_{ \widetilde{ X}} + \Delta = \pi^{*}K_{X} + c\pi^{-1}(Y),
\end{align*}
where $c$ is a parameter which varies over non-negative real numbers.
For \cref{prop:vojta-truncated-lct} below, we bound the global height associated with $ \Delta$ 
using Vojta's conjecture. In order to apply Vojta's conjecture, we need that the coefficient of $ \Delta$ is less than or equal to $1$.
For \cref{prop:vojta-ctversion} below, we bound the height function
\begin{align*}
h_{K_{ \widetilde{ X}}} = h_{\pi^{*}K_{X}} + c h_{\pi^{-1}(Y)} - h_{ \Delta}
\end{align*}
applying Vojta's conjecture to $ \widetilde{X}$. To get a bound of $c h_{\pi^{-1}(Y)} = c h_{Y}\circ \pi$,
we need that $- h_{ \Delta}$ is bounded below (outside a closed set).
This leads us to the condition $- \Delta$ is effective, in other words, the coefficient of $ \Delta$ is less than or equal to $0$.

These conditions on the coefficient of $ \Delta$ exactly correspond to the notion of
\emph{log canonical} and \emph{canonical} singularities.

\subsection{Canonical and log canonical thresholds}

We quickly review the definitions of canonical and log canonical singularities.
We refer to, for example, \cite[Chapter 2]{kolmor} or \cite[Chapter 3]{dem}
for the basic properties of them.
Although these notions are usually defined for pairs of a variety and a divisor on it,
we need the definitions for pairs of a variety and a closed subscheme.

Let $k$ be an algebraically closed field of characteristic zero.
Let $X$ be a smooth quasi-projective variety over $k$ and let $Y \subset X$ be a proper closed subscheme.
(Actually, the following definitions work for $X$ that is $\Q$-Gorenstein, i.e.\ for varieties $X$ whose canonical divisor $K_{X}$ is a $\Q$-Cartier divisor.)

\begin{defn}[Prime divisors over $X$]
A prime divisor over $X$ is a prime divisor $E$ on a normal variety $X'$,
where $X'$ is equipped with a projective birational morphism $\pi \colon X' \longrightarrow X$.
If $\pi$ is not isomorphic at the generic point of $E$, then $E$ is called exceptional over $X$.
We identify two prime divisors over $X$ if they define the same discrete valuation on the function field $k(X)$ of $X$.
\end{defn}

\begin{defn}[Discrepancy of a prime divisor over $X$]
Let $c$ be a non-negative real number.
Let $E$ be a prime divisor over $X$.
The \emph{discrepancy of $E$ with respect to $(X, c\cdot Y)$}, which is denoted by $a(E;X,c\cdot Y)$, is defined as follows:
Take a projective birational morphism $\pi \colon X' \longrightarrow X$ such that 
$X'$ is a normal variety, $E$ appears as a prime divisor on $X'$, and the scheme theoretic inverse image $\pi^{-1}(Y)$
is an effective Cartier divisor on $X'$.
Take canonical divisors $K_{X}$ and $K_{X'}$ so that $\pi_{*}K_{X'}=K_{X}$ and define a divisor $ \Delta$ on $X'$ by the equation
\begin{align*}
K_{X'} + \Delta = \pi^{*}K_{X} + c \pi^{-1}(Y).
\end{align*}
Then we define 
\begin{align*}
a(E;X,c\cdot Y) := \text{(coefficient of $E$ in $- \Delta$)}.
\end{align*}
\end{defn}

\begin{rmk}
See for example \cite[\S 2.3]{kolmor} or \cite[\S 3.1]{dem}
for the independence of this definition on the choice of $X'$ and the canonical divisors.
\end{rmk}

\begin{defn}[Discrepancy of $(X, c\cdot Y)$]
The discrepancy of $(X, c \cdot Y)$ is defined by
\begin{align*}
\discrep(X, c \cdot Y) = \inf \left\{ a(E;X,c\cdot Y) \ \middle|\   \txt{$E$ is an exceptional \\prime divisor over $X$}  \right\}.
\end{align*}
\end{defn}

We say $(X, c\cdot Y)$ is \emph{canonical} if $\discrep(X, c \cdot Y) \geq 0$
and \emph{log canonical} if  $\discrep(X, c \cdot Y) \geq -1$.
These are usually defined for pairs $(X, D)$ of a normal variety and a divisor on it.
We adopt the same definition in our situation.
Note that since our variety $X$ is smooth, we have $\discrep(X, 0 \cdot Y) = 1$.

\begin{defn}[Canonical and log canonical thresholds]
The \emph{canonical threshold} and \emph{log canonical threshold} of $(X, Y)$, denoted by
$\ct(X, Y)$ and $\lct(X,Y)$ respectively, are defied by
\begin{align*}
&\ct(X, Y):= \sup\{c \mid \discrep(X, c \cdot Y)\geq 0\}\\
&\lct(X, Y):= \sup\{ c \mid \discrep(X, c \cdot Y)\geq -1 \}.
\end{align*}
\end{defn}

\begin{rmk}
The supremums in the definition are actually maximum and they can be calculated using
a single log resolution of $(X,Y)$ (cf.\ for example \cite[Corollary 2.31]{kolmor}, \cite[Example 9.3.16]{laz2}).
\end{rmk}

\begin{defn}
Let $X$ be a nice variety over $K$.
Let $Y \subset X$ be a proper closed subscheme.
Define canonical threshold and log canonical threshold by taking base change to $ \overline{K}$:
\begin{align*}
& \ct(X, Y) := \ct(X_{ \overline{K}}, Y_{ \overline{K}})\\
& \lct(X, Y) := \lct(X_{ \overline{K}}, Y_{ \overline{K}}).
\end{align*}
\end{defn}

Let us conclude this subsection with a lemma on a relationship between canonical threshold and log canonical threshold.
We use this relation in the proof of \cref{thm:vojta-hYvshH}.

\begin{lem}\label{lem:ct-vs-lct}
Let $X$ be a smooth projective variety over an algebraically closed field $k$ of characteristic zero.
Let $Y \subset X$ be a proper closed subscheme.
Then we have
\begin{align*}
\ct(X, Y) \geq \frac{\lct(X, Y)}{2}.
\end{align*}
In particular, $\ct(X, Y)>0$.
\end{lem}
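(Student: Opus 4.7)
The plan is to evaluate both thresholds on a single common log resolution and exploit the numerical gap between the conditions ``discrepancy $\geq 0$'' and ``discrepancy $\geq -1$''. Fix a projective birational morphism $\pi\colon \widetilde X \to X$ with $\widetilde X$ smooth such that $\pi^{-1}(Y)$ is an effective Cartier divisor whose support together with all $\pi$-exceptional divisors has simple normal crossings. Choosing compatible canonical divisors, write
\[
\pi^{-1}(Y) = \sum_i a_i E_i, \qquad K_{\widetilde X} - \pi^* K_X = \sum_i k_i E_i,
\]
so that $a(E_i; X, c\cdot Y) = k_i - c\, a_i$ and $k_i = 0$ whenever $E_i$ is not $\pi$-exceptional. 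By the Remark following the definitions (cf.\ \cite[Corollary 2.31]{kolmor}), both invariants can be read off from this single resolution as
\[
\ct(X,Y) = \min_{i \in I} \frac{k_i}{a_i}, \qquad \lct(X,Y) = \min_{i \in I} \frac{k_i+1}{a_i},
\]
where $I$ indexes those $\pi$-exceptional $E_i$ with $a_i > 0$, a minimum over the empty set being $+\infty$.

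The key input, and the step that uses smoothness of $X$ essentially, is the standard fact that every $\pi$-exceptional $E_i$ appearing on a log resolution of a smooth variety satisfies $k_i \geq 1$. Indeed, the center of such an $E_i$ on $X$ has codimension at least $2$, and realizing the log resolution as a succession of blowups of smooth centers (Hironaka), a direct induction shows $k_i \geq \codim_X(\text{center of }E_i) - 1 \geq 1$. This is the step that must be verified with a bit of care: for merely terminal (non-smooth) $X$ one would only get $k_i > 0$, which is not enough for the comparison below.

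Granting $k_i \geq 1$ for all $i \in I$, the inequality $k_i + 1 \leq 2k_i$ gives
\[
\frac{k_i}{a_i} \;\geq\; \frac{k_i+1}{2\, a_i} \;\geq\; \frac{\lct(X,Y)}{2}
\]
for every $i \in I$, and taking the minimum over $i \in I$ yields $\ct(X,Y) \geq \lct(X,Y)/2$. For the ``in particular'' claim: if $I$ is nonempty, $\lct(X,Y)$ is a finite minimum of strictly positive reals, hence positive, so $\ct(X,Y) > 0$; if $I$ is empty both thresholds equal $+\infty$ and the inequality is trivial.
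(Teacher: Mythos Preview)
Your approach is the same as the paper's: pass to a single log resolution, write out the thresholds explicitly, and use that the discrepancy coefficient $k_i$ of every exceptional divisor on a resolution of a smooth variety is at least $1$. The comparison $k_i + 1 \le 2k_i$ is exactly the paper's endgame.

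There is, however, a genuine gap in your formulas for $\ct$ and $\lct$. You restrict $I$ to the $\pi$-exceptional components of $\pi^{-1}(Y)$, but the lemma allows $Y$ to have components of codimension one, in which case $\pi^{-1}(Y)$ has non-exceptional components $D_j$ (with $k_j = 0$ and multiplicity $d_j > 0$). These \emph{do} contribute to both thresholds: although $\discrep(X,c\cdot Y)$ is an infimum over divisors exceptional over $X$, this includes divisors obtained by further blowing up inside $D_j$, and if $c d_j > 1$ such blowups produce arbitrarily negative discrepancies. Concretely (after arranging, as the paper does, that the non-exceptional $D_j$ are pairwise disjoint) one gets
\[
\ct(X,Y)=\min\Bigl\{\tfrac{1}{d_j},\ \tfrac{k_i}{a_i}\Bigr\},\qquad
\lct(X,Y)=\min\Bigl\{\tfrac{1}{d_j},\ \tfrac{k_i+1}{a_i}\Bigr\},
\]
the same term $1/d_j$ appearing in both. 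As a counterexample to your formulas: if $Y$ is a smooth reduced hypersurface then your index set $I$ is empty and your recipe gives $\ct=\lct=+\infty$, whereas in fact $\ct=\lct=1$.

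The fix is immediate and is what the paper does: treat the two cases separately. If the minimum defining $\ct$ is achieved at some $1/d_j$, then $\lct \le 1/d_j = \ct$ and the inequality is trivial; otherwise the minimum is some $k_i/a_i$ with $k_i \ge 1$, and your argument applies verbatim. Note that in the paper's application (\cref{thm:vojta-hYvshH}) the subscheme $Y$ has codimension at least two, so every component of $\pi^{-1}(Y)$ is exceptional and your argument is already complete in that case; but the lemma as stated covers arbitrary proper closed subschemes.
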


\begin{proof}
Let $\pi \colon \widetilde{ X} \longrightarrow X$ be a log resolution of $(X, Y)$, that is
$\pi$ is a birational morphism, $ \widetilde{ X}$ is a smooth projective variety over $k$, 
the scheme theoretic inverse image $\pi^{-1}(Y)$ is an effective Cartier divisor, which we denote by $D$, and
$\pi^{-1}(Y) \cup \Exc(\pi)$ has simple normal crossing support.
By blowing up further if necessary, we may assume that any two non $\pi$-exceptional components
of $D$ do not intersect.
Let us fix a canonical divisor $K_{X}$ of $X$ and let $K_{ \widetilde{ X}}$ be the unique canonical divisor of $ \widetilde{ X}$
such that $\pi_{*}K_{ \widetilde{ X}} = K_{X}$.
Since $X$ is smooth, we have
\begin{align*}
K_{ \widetilde{X}} = \pi^{*}K_{X} + \sum_{i=1}^{r} a_{i}E_{i}
\end{align*}
where $E_{1},\dots ,E_{r}$ are all of the $\pi$-exceptional prime divisors and $a_{i} \in \Z_{>0}$.
We write 
\begin{align*}
D = \sum_{j=1}^{s}d_{j}D_{j} + \sum_{i=1}^{r} b_{i} E_{i}
\end{align*}
where $b_{i} \in \Z_{\geq 0}$, $d_{j} \in \Z_{>0}$, and $D_{j}$ are prime divisors that are not $\pi$-exceptional.
Note that $D_{j}$'s are disjoint. 
For $c\geq 0$, set
\begin{align*}
\Delta:=\pi^{*}K_{X} + cD - K_{ \widetilde{ X}} = c\sum_{j=1}^{s}d_{j}D_{j} + c\sum_{i=1}^{r} b_{i} E_{i} - \sum_{i=1}^{r} a_{i}E_{i}.
\end{align*}
Then $(X, c\cdot Y)$ is canonical (resp. log canonical) if and only if 
\begin{align*}
& cd_{j} \leq 1\quad \text{for all $j=1,\dots, s$, and}\\
& cb_{i}- a_{i} \leq 0 \quad \text{(resp. 1) for all $i =1,\dots ,r$}.
\end{align*}
(cf.\ \cite[Corollary 2.31]{kolmor}.)

Thus we see
\begin{align*}
&\ct(X, Y) = \min_{i,j}\left\{ \frac{1}{d_{j}}, \frac{a_{i}}{b_{i}}\right\};\\
&\lct(X, Y) = \min_{i,j}\left\{ \frac{1}{d_{j}}, \frac{a_{i}+1}{b_{i}}\right\}.
\end{align*}
If $\ct(X, Y) = 1/d_{j}$, then we also have $\lct(X, Y) = 1/d_{j}$ and we are done.
If $\ct(X, Y) = a_{i}/b_{i}$, then we have $\lct(X, Y) \leq (a_{i}+1)/b_{i}$.
Then 
\begin{align*}
2\ct(X, Y) - \lct(X, Y) \geq \frac{a_{i}-1}{b_{i}} \geq 0
\end{align*}
since $a_{i}$ is a positive integer, and we are done.
\end{proof}

\subsection{Vojta's conjecture with canonical and log canonical thresholds}

\begin{prop}\label{prop:vojta-truncated-lct}
Let $K$ be a number field and $S \subset M_{K}$ a finite subset containing all archimedean ones. 
Let $X$ be a nice variety over $K$ and let $Y \subset X$ be a proper closed subscheme.
Let $H$ be a big divisor on $X$ and $K_{X}$ a canonical divisor on $X$.
Fix $\l_{Y}, h_{Y},  h_{K_{X}}, h_{H}$.
Assume Vojta's conjecture with truncated counting function (\cref{conj:vojtawithcounting}).
Then for any $\e>0$ there is a proper Zariski closed subset $Z \subset X$ such that
\[
N_{K,S}^{(1)}(Y, x) \geq \lct(X,Y)h_{Y}(x) + h_{K_{X}}(x) - \e h_{H}(x)
\]
for all $x \in (X \setminus Z)(K)$.
\end{prop}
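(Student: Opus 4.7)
Set $c = \lct(X,Y)$. The strategy is to apply the hypothesized Vojta conjecture with truncated counting function (\cref{conj:vojtawithcounting}) on a log resolution of $(X,Y)$ to a suitable SNC divisor, and then translate the resulting inequality back down to $X$ using functoriality of heights associated to subschemes.

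The first step is to fix a log resolution $\pi \colon \widetilde X \to X$ of $(X,Y)$: a projective birational morphism from a nice variety $\widetilde X$ over $K$ such that $D := \pi^{-1}(Y)$ is an effective Cartier divisor and $\Supp D \cup \Exc(\pi)$ has simple normal crossings, with prime decompositions $D = \sum_i r_i E_i$ ($r_i \geq 0$) and $K_{\widetilde X} = \pi^*K_X + \sum_i a_i E_i$ ($a_i \geq 0$). As in the proof of \cref{lem:ct-vs-lct}, $c = \min_{r_i > 0}(a_i+1)/r_i$, so $(a_i+1) \geq c r_i$ for every $i$ with $r_i > 0$. Let $E' := \sum_{r_i > 0} E_i$; this is a sub-SNC divisor of $\Supp D \cup \Exc(\pi)$. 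Applying \cref{conj:vojtawithcounting} on $(\widetilde X, E', \pi^* H)$ produces, for any $\e' > 0$, a proper Zariski closed $\widetilde Z \subset \widetilde X$ with
\[ N^{(1)}_{K,S}(E', \tilde x) \geq h_{E'}(\tilde x) + h_{K_{\widetilde X}}(\tilde x) - \e' h_{\pi^* H}(\tilde x) \]
for every $\tilde x \in (\widetilde X \setminus \widetilde Z)(K)$.

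Next I would rewrite the right-hand side and push the estimate down to $X$. Expanding gives
\[ h_{E'} + h_{K_{\widetilde X}} = h_{K_X}\circ\pi + c \sum_i r_i h_{E_i} + \sum_{r_i > 0}\bigl((a_i+1) - c r_i\bigr)h_{E_i} + \sum_{r_i = 0} a_i h_{E_i}, \]
where every coefficient in the last two sums is $\geq 0$. Since Weil heights of effective divisors are bounded below away from their supports, both of those sums exceed $-O(1)$ as soon as $\tilde x \notin \Supp(\sum_i E_i)$. The middle term equals $c\, h_D$, and by functoriality of subscheme heights under pullback one has $h_D = h_Y\circ\pi + O(1)$ and $\l_{D,v} = \l_{Y,v}\circ\pi + O(1)$. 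Since $E' \leq D$ as effective Cartier divisors, $\l_{E',v} \leq \l_{D,v} + O(1)$, and taking the $v$-wise minimum with $\log\#(\O_K/\p_v)^{1/[K:\Q]}$ preserves this ordering, yielding $N^{(1)}_{K,S}(E',\tilde x) \leq N^{(1)}_{K,S}(Y,x) + O(1)$. Restricting $x \in X(K)$ to the complement of $Y \cup \pi(\widetilde Z) \cup \bigcup_{r_i = 0}\pi(E_i)$---a proper closed subset of $X$, since each $\pi(E_i)$ with $r_i = 0$ lies in the image of $\Exc(\pi)$---the unique lift $\tilde x$ then avoids both $\widetilde Z$ and $\Supp(\sum_i E_i)$, and combining everything gives
\[ N^{(1)}_{K,S}(Y,x) \geq c\, h_Y(x) + h_{K_X}(x) - \e' h_H(x) - C \]
for some constant $C$ depending on $\e'$.

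To finish, I would choose $\e' < \e$ and enlarge $Z$ by the finite (by Northcott on $X(K)$) set of points with $h_H(x) < C/(\e - \e')$, thereby absorbing $C$ into $\e h_H$ and recovering the statement. I expect the main technical care to lie in two places: controlling exceptional prime components that are not contained in $\pi^{-1}(Y)$ (the $r_i = 0$ terms), which is what forces the enlargement of $Z$ by $\bigcup_{r_i = 0}\pi(E_i)$; and justifying the truncated-counting comparison $N^{(1)}_{K,S}(E',\tilde x) \leq N^{(1)}_{K,S}(Y,x) + O(1)$ from the Cartier inequality $E' \leq D$ together with the functorial identity $\l_D = \l_Y\circ\pi + O(1)$.
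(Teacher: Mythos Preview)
Your approach is essentially the paper's: pass to a log resolution, apply \cref{conj:vojtawithcounting} upstairs to a reduced SNC divisor supported in $\pi^{-1}(Y)$, bound the height side from below via the log-canonical inequality $a_i+1 \geq cr_i$, and compare truncated counting functions via $E' \leq D$; the paper's only differences are that it applies Vojta to the slightly smaller divisor $\lceil\Delta_{\geq 0}\rceil$ instead of your $E'=D_{\mathrm{red}}$, and first passes to a finite extension of $K$ via \cref{lem:basechange-counting} to make all components geometrically irreducible. One small slip to fix: since $H$ is only big, Northcott does not guarantee that $\{x\in X(K): h_H(x) < C/(\e-\e')\}$ is finite, only that it lies in a proper Zariski closed subset---which is all you need and is exactly how the paper absorbs the constant.
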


\begin{lem}\label{lem:basechange-counting}
Let $K$ be a number field and $S \subset M_{K}$ a finite subset containing all archimedean ones. 
Let $X$ be a geometrically irreducible projective variety over $K$ and let $Y \subset X$ be a proper closed subscheme.
Fix a local height function $\l_{Y}$ associated with $Y$.
Let $L \supset K$ be a finite extension and let $T \subset M_{L}$ be the finite set of absolute values that are lying over $S$.
Then 
\begin{align*}
N_{L,T}^{(1)}(Y, x) \leq N_{K,S}^{(1)}(Y, x) 
\end{align*}
for all $x \in (X \setminus Y)(K) \subset (X \setminus Y)(L)$.
\end{lem}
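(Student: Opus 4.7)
The plan is to reduce the inequality to a place-by-place comparison. By the definition of $T$, the complement $M_L \setminus T$ decomposes as the disjoint union $\bigsqcup_{w \notin S}\{v \in M_L : v \text{ lies over } w\}$, so
\[
N_{L,T}^{(1)}(Y,x) = \sum_{w \notin S} \sum_{v \mid w} \min\bigl\{ \l_{Y,v}(x),\ \log \sharp(\O_L/\p_v)^{1/[L:\Q]}\bigr\}.
\]
Hence it suffices to show, for each non-archimedean $w \notin S$ and $x \in (X \setminus Y)(K)$, the inequality
\[
\sum_{v \mid w} \min\bigl\{\l_{Y,v}(x), \log \sharp(\O_L/\p_v)^{1/[L:\Q]}\bigr\} \leq \min\bigl\{\l_{Y,w}(x), \log \sharp(\O_K/\p_w)^{1/[K:\Q]}\bigr\}
\]
up to an $M_K$-bounded function independent of $x$.

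Two ingredients feed into this. First, for $x \in X(K)$, the normalization formula $|a|_v = |a|_w^{[L_v:K_w]/[L:K]}$ (valid for $a \in K$) combined with $\sum_{v \mid w}[L_v:K_w] = [L:K]$ yields the place-wise additivity $\sum_{v \mid w} \l_{D,v}(x) = \l_{D,w}(x)$ for any effective Cartier divisor $D$. Writing $Y = D_1 \cap \cdots \cap D_r$ and choosing $\l_Y = \min_i \l_{D_i}$, the elementary inequality $\sum_j \min_i a_{ij} \leq \min_i \sum_j a_{ij}$ then produces the sub-additive bound $\sum_{v \mid w} \l_{Y,v}(x) \leq \l_{Y,w}(x)$. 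Second, using $\sharp(\O_L/\p_v) = \sharp(\O_K/\p_w)^{f(v|w)}$ and the fundamental identity $\sum_{v \mid w} e(v|w)f(v|w) = [L:K]$, one computes
\[
\sum_{v \mid w} \log \sharp(\O_L/\p_v)^{1/[L:\Q]} = \frac{\log \sharp(\O_K/\p_w)}{[L:\Q]}\sum_{v \mid w} f(v|w) \leq \log \sharp(\O_K/\p_w)^{1/[K:\Q]},
\]
with equality precisely when $w$ is unramified in $L$.

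Combining these two bounds with the elementary inequality $\sum_j \min\{a_j,b_j\} \leq \min\{\sum_j a_j,\sum_j b_j\}$ and the monotonicity of $\min$ yields the desired per-$w$ comparison, and summing over $w \notin S$ completes the proof. There is no real obstacle here: the argument is essentially careful bookkeeping of the normalizations of absolute values, with the only substantive point being that for a higher-codimension closed subscheme the decomposition of local heights under base change is only sub-additivity rather than equality, which is precisely the reason the lemma takes the form of an inequality.
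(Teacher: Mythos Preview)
Your argument is correct and follows essentially the same place-by-place strategy as the paper: group $M_L \setminus T$ by the underlying place of $K$, then compare each packet using the behaviour of local heights and residue-field sizes under base change. The one difference is that you take a detour through a presentation $Y = D_1 \cap \cdots \cap D_r$ to obtain only sub-additivity $\sum_{v\mid w}\l_{Y,v}(x)\le \l_{Y,w}(x)$, whereas under the paper's normalization one has the exact identity $\l_{Y,v}(x)=\tfrac{[L_v:K_w]}{[L:K]}\,\l_{Y,w}(x)$ for $x\in X(K)$ and any closed subscheme $Y$ (this is precisely what the commuting-diagram convention in \S\ref{sec:htdef} encodes), so the paper factors out $\tfrac{[L_v:K_w]}{[L:K]}$ from the $\min$ directly and sums these weights to $1$. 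Your sub-additivity is of course sufficient, but the equality makes the bookkeeping shorter and shows that the inequality in the lemma comes \emph{solely} from $f(v\mid w)\le [L_v:K_w]$, not from any loss in the height term.
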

\begin{proof}
We calculate
\begin{align*}
&N_{L,T}^{(1)}(Y, x) \\
&= \sum_{w \notin T} \min\left\{ \l_{Y, w}(x), \log\sharp(\O_{L}/\p_{w})^{1/[L:\Q]} \right\} \\
&= \sum_{v \notin S} \sum_{w|v}
 \min\left\{  \frac{[L_{w}:K_{v}]}{[L:K]}\l_{Y,v}(x) ,  \frac{[ \O_{L}/\p_{w} : \O_{K}/\p_{v}  ]}{[L:K]} \log\sharp(\O_{K}/\p_{v})^{1/[K:\Q]}\right\} \\
& \leq  \sum_{v \notin S} \sum_{w|v} \frac{[L_{w}:K_{v}]}{[L:K]} \min\left\{ \l_{Y, v}(x), \log \sharp (\O_{L}/\p_{v})^{1/[K:\Q]}  \right\} \\
& = \sum_{v \notin S}\min\left\{ \l_{Y, v}(x), \log \sharp (\O_{L}/\p_{v})^{1/[K:\Q]}  \right\}  = N_{K,S}^{(1)}(Y,x).
\end{align*}
\end{proof}

\begin{proof}[Proof of \cref{prop:vojta-truncated-lct}]
Note that the RHS is compatible with base change.
By \cref{lem:basechange-counting}, it is enough to prove the statement after taking a finite base change.
Therefore, we may assume that there is a log resolution $\pi \colon \widetilde{ X} \longrightarrow X$ of $(X,Y)$ over $K$ such that
every irreducible component of $\pi^{-1}(Y)$ and $\Exc(\pi)$ is geometrically irreducible.

Then note that $\pi^{-1}(Y)$ is an effective Cartier divisor.
Set $c = \lct(X,Y)$.
Let $K_{ \widetilde{X}}$ be the canonical divisor such that $\pi_{*}K_{ \widetilde{ X}} = K_{X}$.
If we write
\begin{align*}
K_{ \widetilde{X}} + \Delta = \pi^{*}K_{X} + c \pi^{-1}(Y),
\end{align*}
then $ \Delta$ is a divisor with SNC support and its coefficients are less than or equal to $1$.
Let us write $ \Delta_{\geq 0}$ the effective part of $ \Delta$.
Apply \cref{conj:vojtawithcounting} to $ \widetilde{X}$ and $ \Delta_{\geq 0}$.
Then for any $\e>0$, there is a proper Zariski closed subset $Z' \subset \widetilde{ X}$ such that
\[
N_{K,S}^{(1)}( \ceil{ \Delta_{\geq 0}}, x) \geq h_{ \Delta_{\geq 0}}(x) + h_{K_{ \widetilde{X}}}(x) - \e h_{\pi^{*}H}(x)
\]
for all $x \in ( \widetilde{ X} \setminus Z')(K)$.
Here $\ceil{ \Delta_{\geq 0}}$ is the divisor obtained by rounding up the coefficients of $ \Delta_{\geq 0} $.
Note that outside a proper Zariski closed subset, we have
\begin{align*}
h_{ \Delta_{\geq 0}} + h_{K_{ \widetilde{X}}} - \e h_{\pi^{*}H} &\geq h_{ \Delta} + h_{K_{ \widetilde{X}}} - \e h_{\pi^{*}H} \\
& = h_{K_{X}} \circ \pi + c h_{Y} \circ \pi - \e h_{H}\circ \pi
\end{align*}
up to bounded function.

On the other hand, we have
\begin{align*}
\Delta = \pi^{*}K_{X} + c \pi^{-1}(Y) -K_{ \widetilde{X}} \leq c \pi^{-1}(Y) 
\end{align*}
since $K_{ \widetilde{X}} = \pi^{*}K_{X} + E$ for some effective $\pi$-exceptional divisor $E$.
Since $c\pi^{-1}(Y)$ is effective, we have $c\pi^{-1}(Y) \geq \Delta_{\geq 0}$.
In particular, we have $\pi^{-1}(Y) \geq \ceil{\Delta_{\geq 0}}$.
Thus outside a proper Zariski closed subset, we have
\begin{align*}
N_{K,S}^{(1)}(Y , \pi(-) ) = N_{K,S}^{(1)}(\pi^{-1}(Y) , - ) \geq N_{K,S}^{(1)}( \ceil{\Delta_{\geq 0}}, -)
\end{align*}
up to bounded function.

Therefore  off a proper Zariski closed subset, we get
\begin{align*}
N_{K,S}^{(1)}(Y , - ) \geq h_{K_{X}}  + c h_{Y}  - \e h_{H}
\end{align*}
up to bounded function.
Since the set of $K$-rational points on which $h_{H}$ is bounded is contained in a proper Zariski closed subset,
we can get rid of the implicit constant in the above inequality by applying the same inequality for $\e/2$ for example.

\end{proof}

\begin{prop}\label{prop:vojta-ctversion}
Let $X$ be a nice variety over $K$.
Let $Y \subset X$ be a proper closed subscheme of codimension at least two.
Assume Vojta's conjecture (\cref{conj:vojta}).
Let $H$ be an ample divisor on $X$ and $K_{X}$ a canonical divisor on $X$.
Fix $h_{H}, h_{K_{X}}, h_{Y}$.
Then for any $\e>0$, there is a proper closed subset $Z \subset X$ such that
\begin{align*}
\ct(X, Y)h_{Y}(x) \leq \e h_{H}(x) - h_{K_{X}}(x)
\end{align*}
for $x \in (X \setminus Z)(K)$.
\end{prop}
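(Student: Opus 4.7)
The plan mirrors the proof of \cref{prop:vojta-truncated-lct}: replace $\lct(X,Y)$ by $c:=\ct(X,Y)$ and replace Vojta's conjecture with truncated counting by the plain form \cref{conj:vojta}, applied to a log resolution. After a finite base change (which leaves every quantity in the statement unchanged, in the same way as in the previous proof), I take a log resolution $\pi\colon\widetilde X\to X$ of $(X,Y)$ with $\pi^{-1}(Y)$ an effective Cartier divisor whose support together with $\Exc(\pi)$ has SNC, and fix a canonical divisor $K_{\widetilde X}$ with $\pi_*K_{\widetilde X}=K_X$. Define $\Delta$ on $\widetilde X$ by
\[
K_{\widetilde X}+\Delta=\pi^*K_X+c\,\pi^{-1}(Y).
\]

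The key observation is that $-\Delta$ is effective. Because $\codim_X Y\ge 2$, any irreducible component of $\pi^{-1}(Y)$ that is not $\pi$-exceptional would map onto a divisor contained in $Y$, which is impossible; hence every irreducible component of $\pi^{-1}(Y)$, and therefore every prime divisor occurring in $\Delta$, is $\pi$-exceptional. By the very definition of $c=\ct(X,Y)$, the discrepancy $a(E;X,c\cdot Y)\ge 0$ for every such $E$, i.e.\ the coefficient of $E$ in $\Delta$ is $\le 0$. Thus $-\Delta\ge 0$.

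Next, I apply \cref{conj:vojta} to $\widetilde X$ with the trivial SNC divisor $D=0$ and with the big divisor $\pi^*H$: for any $\e>0$ there is a proper Zariski closed $Z'\subset\widetilde X$ such that $h_{K_{\widetilde X}}\le\e\, h_{\pi^*H}$ on $(\widetilde X\setminus Z')(K)$. Substituting $K_{\widetilde X}=\pi^*K_X+c\,\pi^{-1}(Y)-\Delta$, using functoriality of heights ($h_{\pi^*K_X}=h_{K_X}\circ\pi$, $h_{\pi^{-1}(Y)}=h_Y\circ\pi$ off $\pi^{-1}(Y)$, $h_{\pi^*H}=h_H\circ\pi$, all up to $M_K$-bounded functions), and using that $h_\Delta=-h_{-\Delta}\le O(1)$ off $\Supp(-\Delta)$ since $-\Delta$ is effective, I obtain
\[
h_{K_X}\circ\pi+c\,h_Y\circ\pi\le\e\,h_H\circ\pi+O(1)
\]
off a proper closed subset of $\widetilde X$. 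Since $\pi$ is an isomorphism over $X\setminus Y$, this descends to the same inequality on $X\setminus Y$ off a proper closed subset of $X$.

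To remove the residual $O(1)$, I apply the step above with $\e/2$ in place of $\e$, obtaining an absolute constant $C$ such that $c\,h_Y+h_{K_X}\le(\e/2)h_H+C$ outside some proper closed $Z$; since $H$ is ample, Northcott's theorem implies that $\{x\in X(K):h_H(x)\le 2C/\e\}$ is finite, hence contained in a proper closed subset, which I absorb into $Z$ to get $\ct(X,Y)h_Y(x)\le \e\,h_H(x)-h_{K_X}(x)$ on $(X\setminus Z)(K)$. I do not anticipate any serious technical obstacle: the essential new input beyond the proof of \cref{prop:vojta-truncated-lct} is the codimension hypothesis, which is precisely what forces every component of $\pi^{-1}(Y)$ to be exceptional and thereby makes $-\Delta$ effective; this in turn is what allows $\ct(X,Y)$ to replace $\lct(X,Y)$ and lets the $h_\Delta$ term be discarded rather than retained on the left-hand side.
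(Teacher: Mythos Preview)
Your proposal is correct and follows essentially the same route as the paper's proof: after a finite base change, take a log resolution, use $\codim Y\ge 2$ to see that $\Delta$ is $\pi$-exceptional and the definition of $\ct$ to see that $-\Delta$ is effective, then apply \cref{conj:vojta} on $\widetilde X$ with $D=0$ and descend. Your explicit handling of the residual $O(1)$ via Northcott is exactly the kind of step the paper absorbs into the phrase ``by enlarging $Z$ if necessary.''
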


\begin{proof}
It is enough to prove the statement after taking base change by a finite extension of $K$.
Therefore we may take a log resolution $\pi \colon \widetilde{ X} \longrightarrow X$ of $(X, Y)$, that is
$\pi$ is a birational morphism, $ \widetilde{ X}$ is a nice variety over $K$, 
the scheme theoretic inverse image $\pi^{-1}(Y)$ is an effective Cartier divisor, which we denote by $D$, and
$\pi^{-1}(Y) \cup \Exc(\pi)$ has simple normal crossing support with all irreducible components geometrically integral.
Let $K_{ \widetilde{ X}}$ be the unique canonical divisor of $ \widetilde{ X}$
such that $\pi_{*}K_{ \widetilde{ X}} = K_{X}$.
Let $c = \ct(X, Y)$.
Define 
\[
\Delta := \pi^{*}K_{X} + cD - K_{ \widetilde{ X}}.
\]
Since $Y$ has codimension at least two, $ \Delta$ is $\pi$-exceptional.
Since $c$ is the canonical threshold of $(X, Y)$, all coefficient of $ \Delta$ are less than or equal to zero,
in other words, $- \Delta$ is effective.
Now apply Vojta's conjecture to $ \widetilde{X}$.
Since $\pi^{*}H$ is big, for any $\e$, there is a proper closed subset $Z \subset \widetilde{ X}$ such that
\begin{align*}
h_{K_{ \widetilde{ X}}}(x) \leq \e h_{\pi^{*}H}(x)
\end{align*}
for $x \in ( \widetilde{ X} \setminus Z)(K)$.
Since 
\begin{align*}
h_{ K_{ \widetilde{ X}}} &= c h_{D} + h_{\pi^{*}K_{X}} + h_{- \Delta} + O(1)\\
& =c h_{Y} \circ \pi + h_{K_{X}} \circ \pi  + h_{- \Delta} + O(1)\\
& \geq c h_{Y} \circ \pi + h_{K_{X}} \circ \pi + O(1)
\end{align*}
on $( \widetilde{ X} \setminus (D \cup \Exc(\pi)))$, 
by enlarging $Z$ if necessary, we get
\begin{align*}
ch_{Y}\circ \pi \leq \e h_{H}\circ \pi -  h_{K_{X}} \circ \pi
\end{align*}
on $( \widetilde{ X} \setminus Z)(K)$.
Thus we get
\begin{align*}
ch_{Y} \leq \e h_{H}-  h_{K_{X}} 
\end{align*}
on $(X \setminus \pi(Z))(K)$.

\end{proof}

\section{Invariant $e(Y)$}\label{sec:invariant-e}

Let us introduce an invariant attached to a subscheme which measures how badly self-morphisms ramified along
the iterated preimages of the subscheme.
We briefly summarize the definitions and some of basic properties.
See \cite{ma20} for more properties and proofs of them.

In this section, the ground field is a field of characteristic zero.
Also, if we write $x \in X$ for a scheme $X$, this literally means $x$ is a point of the underlying topological space of $X$.

\begin{defn}
For a finite flat morphism $f \colon X \longrightarrow Y$ between algebraic schemes and a (scheme) point $x \in X$,
we define  \emph{the multiplicity of $f$ at $x$} by
\[
e_{f}(x) = l_{ \mathcal{O}_{X,x}}(\mathcal{O}_{X,x}/f^{*} \mathfrak{m}_{f(x)}\O_{X,x}).
\]
Here $l_{ \mathcal{O}_{X,x}}$ stands for the length as an $ \mathcal{O}_{X,x}$ module.
\end{defn}

\begin{rmk}\label{rmk:multfldext}
Let $k$ be the ground field, which is characteristic zero as we always assume, and $k \subset k'$ be a field extension.
Then for any scheme point $x' \in X_{k'}$ lying over $x \in X$, we have
$e_{f_{k'}}(x')=e_{f}(x)$. 
\end{rmk}

\begin{defn}
Let $X$ be an algebraic scheme over a field of characteristic zero.
Let $f \colon X \longrightarrow X$ be a finite flat surjective morphism.
We define
\begin{align*}
e_{f,-}(x):=e_{-}(x) := \lim_{n\to \infty} \Bigl( \sup\{e_{f^{n}}(y) \mid y\in X, f^{n}(y)=x\}   \Bigr)^{1/n}.
\end{align*}
The existence of this limit is due to \cite[Theorem A.3.5]{gig}. See \cite[Theorem 4.8]{ma20} too.
\end{defn}

\begin{rmk}[{\cite[Theorem A.3.5]{gig}, \cite[Theorem 4.8]{ma20}}]\label{rmk:prope-}
Let us list some properties of the function. 
\begin{enumerate}
\item
The function 
\begin{align*}
e_{f,-} \colon X \longrightarrow \R
\end{align*}
is upper semicontinuous.

\item 
We have
\begin{align*}
e_{f,-}(x) = \max\left\{ e_{f,+}(y) \ \middle| \  \txt{$y\in X$ is an $f$-periodic \\scheme point such that $x \in \overline{\{y\}}$}  \right\}.
\end{align*}
where $e_{f,+}$ is the average multiplicity along the forward orbit:
\begin{align*}
e_{f,+}(x):=e_{+}(x) := \lim_{n\to \infty} e_{f^{n}}(x)^{1/n}.
\end{align*}
\end{enumerate}
\end{rmk}

Finally, we define:
\begin{defn}\label{def:e(Y)}
Let $X$ be an algebraic scheme over a field of characteristic zero.
Let $f \colon X \longrightarrow X$ be a finite flat surjective morphism.
For a closed subscheme $Y \subset X$, we define
\begin{align*}
e(Y) = \max\{ e_{f,-}(y) \mid y \in Y \}.
\end{align*}
The existence of the maximum follows from the upper semicontinuity of the function $e_{f,-}$.
\end{defn}

\begin{rmk}
Let $k$ be the ground field and $k \subset k'$ be a field extension.
Then for any scheme point $x' \in X_{k'}$ lying over $x \in X$, we have
$e_{f_{k'},-}(x')=e_{f,-}(x)$. 
Thus for any closed subscheme $Y \subset X$, we have $e(Y_{k'})=e(Y)$.
\end{rmk}

\begin{rmk}
Although we do not need the following explicitly in this paper, but let us point out the following equality actually holds \cite[Theorem 4.8]{ma20}:
\[
e(Y)=\lim_{n\to \infty} \sup_{y \in Y} \left(\sup \{ e_{f^{n}}(x) \mid f^{n}(x) = y \}^{1/n} \right).
\]

\end{rmk}

\section{Growth of heights associated with subvarieties}\label{sec:hY-vs-hH}

In this section, we discuss the limit
\begin{align*}
\lim_{n \to \infty} \frac{h_{Y}(f^{n}(x))}{h_{H}(f^{n}(x))},
\end{align*}
and prove \cref{thm:intro:hYvshH} (=\cref{thm:vojta-hYvshH}).

Let $K$ be a number field.
Let us first state a conjecture on a sufficient condition for the limit being zero.
This conjecture is justified by \cref{thm:vojta-hYvshH}.

\begin{conj}\label{conj:subvarht-vs-ampleht}
Let $X$ be a nice variety over $K$.
Let $f \colon X \longrightarrow X$ be a surjective morphism.
Let $Y \subset X$ be a closed subscheme of codimension at least two.
Let $H$ be an ample divisor on $X$.
Fix global height functions $h_{Y}$ and $h_{H}$.
Let $x \in X( \overline{K})$ be a point.
If $e(Y) < \alpha_{f}(x)$ and $O_{f}(x)$ is Zariski dense, then we have
\begin{align*}
\lim_{n \to \infty} \frac{h_{Y}(f^{n}(x))}{h_{H}(f^{n}(x))} = 0.
\end{align*}
\end{conj}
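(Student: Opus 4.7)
The plan is to deduce the conjecture from Vojta's conjecture by applying \cref{prop:vojta-ctversion} to the iterated preimages $f^{-N}(Y)$ and letting $N\to\infty$, exploiting $e(Y)<\a_f(x)$.

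First, after replacing $K$ by a finite extension I may assume $x\in X(K)$ and that $(X,f^{-N}(Y))$ admits a log resolution with geometrically integral components for every $N$. Each $f^{-N}(Y)$ still has codimension at least two, so \cref{prop:vojta-ctversion} gives, for any $\e'>0$, a proper closed $Z_N\subsetneq X$ with
\[
\ct(X,f^{-N}(Y))\,h_{f^{-N}(Y)}(y)\leq \e'\,h_H(y)-h_{K_X}(y)
\]
for $y\in (X\setminus Z_N)(K)$. By functoriality of subscheme heights, $h_{f^{-N}(Y)}=h_Y\circ f^N+O(1)$; substituting $y=f^{n-N}(x)$ (for $n\ge N$ with $f^{n-N}(x)\notin Z_N$) yields
\[
h_Y(f^n(x))\leq \frac{\e'\,h_H(f^{n-N}(x))-h_{K_X}(f^{n-N}(x))}{\ct(X,f^{-N}(Y))}+O(1).
\]

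Second, the decisive quantitative input is a bound of the shape $\ct(X,f^{-N}(Y))\geq C\cdot e(Y)^{-N}$ for some $C>0$ independent of $N$. This should follow from the formula $\ct=\min_{i,j}\{1/d_j,\,a_i/b_i\}$ used in \cref{lem:ct-vs-lct} together with the definition of $e(Y)$: the multiplicities of $f^{-N}(Y)$ at points over $Y$ are controlled, up to subexponential factors, by the ramification indices $e_{f^N}$ there, which by definition of $e(Y)$ grow at most like $e(Y)^N$. Combining this bound with \cref{prop:growthht}, which gives $h_H(f^m(x))\asymp m^{\ell}\a_f(x)^m$, and the comparison $|h_{K_X}|\leq c\,h_H+O(1)$, the previous inequality becomes
\[
\frac{h_Y(f^n(x))}{h_H(f^n(x))}\leq C'\left(\frac{e(Y)}{\a_f(x)}\right)^{\!N}(\e'+1)+o(1)
\]
as $n\to\infty$, valid whenever $f^{n-N}(x)\notin Z_N$. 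Since $e(Y)/\a_f(x)<1$, fixing $N$ large and then $\e'$ small makes the right-hand side arbitrarily small. This already recovers \cref{thm:vojta-hYvshH}.

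The main obstacle in passing from that weaker conclusion to the genuine pointwise limit under the hypothesis that $O_f(x)$ is only Zariski dense, and not generic, is the possibility that $f^{n-N}(x)\in Z_N$ for infinitely many $n$. I would attempt a Noetherian induction on $\dim X$ by restricting to the Zariski closure $W\subsetneq X$ of the exceptional subsequence of orbit points and trying to recover the induction hypotheses ($Y\cap W'$ has codimension $\geq 2$ in a suitable $f$-forward-invariant enlargement $W'$ of $W$, and $e(Y\cap W')<\a_{f|_{W'}}(x)$) so as to apply the statement recursively. Controlling these invariants upon restriction, and showing that $W'$ can be chosen so that the bad subsequence remains Zariski dense in it, seems to require a dynamical Mordell--Lang type input genuinely beyond Vojta's conjecture; this is why the statement is formulated as a conjecture rather than a theorem.
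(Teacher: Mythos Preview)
The statement you are addressing is a \emph{conjecture}; the paper does not prove it. What the paper does prove is \cref{thm:vojta-hYvshH}, which reaches the same conclusion under Vojta's conjecture together with the stronger hypothesis that $O_f(x)$ is \emph{generic} (every proper closed set meets the orbit only finitely often), not merely Zariski dense. Your proposal correctly reconstructs the argument for \cref{thm:vojta-hYvshH} and correctly identifies why the full conjecture remains open.

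Two remarks on your sketch of the \cref{thm:vojta-hYvshH} part. First, your justification of the lower bound on $\ct(X,f^{-N}(Y))$ is heuristic. The paper does not argue directly via multiplicities and the formula in \cref{lem:ct-vs-lct}; rather, it invokes \cite[Corollary~7.13]{ma20} to obtain $\lct(X,f^{-k}(Y))\geq (e(Y)+\e')^{-k}$ for $k\gg 0$, and then applies \cref{lem:ct-vs-lct} to get $\ct(X,f^{-k}(Y))\geq \tfrac{1}{2}(e(Y)+\e')^{-k}$. Your proposed direct route is not obviously wrong, but it is not carried out, and the paper's route is cleaner. Second, the paper fixes a single large $k$ and applies \cref{prop:vojta-ctversion} once with $\e=1$, rather than keeping both $N$ and $\e'$ free; this is a cosmetic difference.

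On the gap between \cref{thm:vojta-hYvshH} and the conjecture: your diagnosis is exactly the paper's. The obstruction is that Vojta's conjecture produces an exceptional closed set $Z_N$, and Zariski density alone does not force $f^{n-N}(x)\notin Z_N$ for all large $n$. The paper does not attempt the Noetherian induction you outline; it simply records the conjecture and notes (cf.\ the remark following \cref{thm:vojta-hYvshH}) that Dynamical Mordell--Lang would upgrade Zariski density to genericness. Your observation that closing the gap seems to require a DML-type input beyond Vojta is consistent with the paper's stance; indeed, in \cref{prop:DMLP2} the paper shows that the conjecture itself \emph{implies} DML for $\P^2_{\QQ}$, so the dependence is not accidental.
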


\begin{rmk}
As $h_{Y}(f^{n}(x)) = \infty$ if $f^{n}(x) \in Y$, \cref{conj:subvarht-vs-ampleht} contains the statement that 
$O_{f}(x)$ intersects with $Y$ only finitely many times under the assumptions.
This is trivial if $\dim Y=0$.
On the other hand, it is a consequence of Dynamical Mordell-Lang conjecture that
a Zariski dense orbit intersects with every proper closed subset only finitely many times.
\end{rmk}

\begin{rmk}\label{rmk:onthecond}
The assumption that $O_{f}(x)$ is dense is necessary.
However, there seems to be many possible examples satisfying $e(Y) \geq \alpha_{f}(x)$ for which the statement holds
(cf. \cref{ex:a=e=1,prop:hY-vs-hH-etale}).
It might be possible to replace $e(Y) < \alpha_{f}(x)$ with weaker conditions (cf.\ \cref{Q:ht-ratio-higher-dyn.deg}).
\end{rmk}

\begin{ex}\label{ex:cz}
Let $X = \P^{2}_{\Q}$ and 
\[
f \colon \P^{2}_{\Q} \longrightarrow \P^{2}_{\Q}, (X_{0}:X_{1}:X_{2}) \mapsto (X_{0}^{2}:X_{1}^{2}:X_{2}^{2}). 
\]

First consider $Y = \{(0:0:1)\}$ with reduced scheme structure.
Then $e(Y) = 4$.
Let $x = (2:6:1) \in \P^{2}(\Q)$.
Let $h_{\rm naive} = h_{\P^{2}, {\rm naive}}$ be the naive height on $\P^{2}$.
Then we have
\begin{align*}
\frac{h_{Y}(f^{n}(x))}{h_{\rm naive}(f^{n}(x))} = \frac{\log \gcd(2^{2^{n}}, 6^{2^{n}})}{ \log \max\{ 2^{2^{n}}, 6^{2^{n}} \}}
= \frac{2^{n}\log 2}{2^{n} \log 6} = \frac{\log 2}{\log 6} > 0.
\end{align*}
In this case, $x$ has Zariski dense $f$-orbit and $ \alpha_{f}(x) = 2 < 4 = e(Y)$.

Next, consider $Y' = \{(1:1:1)\}$ with reduced scheme structure.
Then $e(Y') = 1$.
Let $x = (a:b:1) \in \P^{2}(\Q)$ where $a, b$ are integers greater than or equal to $2$.
Then we have
\begin{align*}
h_{Y'}(f^{n}(x)) &= h_{Y'}((a^{2^{n}}:b^{2^{n}}:1)) \\
&= \log \frac{ \max\{ |a^{2^{n}}|, |b^{2^{n}}|,1\}}{\max\{ |a^{2^{n}}-1|, |b^{2^{n}} -1|  \}} + \log \gcd(a^{2^{n}}-1, b^{2^{n}}-1),\\
h_{\rm naive}(f^{n}(x)) & = \log \max\{ |a^{2^{n}}|, |b^{2^{n}}| , 1\}.
\end{align*}
Therefore,
\begin{align*}
\lim_{n \to \infty} \frac{h_{Y'}(f^{n}(x))}{h_{\rm naive}(f^{n}(x))} = 0
\end{align*}
is equivalent to 
\begin{align*}
\lim_{n\to \infty}\frac{  \log \gcd(a^{2^{n}}-1, b^{2^{n}}-1)}{\log \max\{|a^{2^{n}}|, |b^{2^{n}}|\}}=0.
\end{align*}
According to Bugeaud-Corvaja-Zannier \cite{bcz}, this holds if $a$ and $b$ are multiplicatively independent, which is equivalent to the Zariski density of $O_{f}(x)$.
\end{ex}

\begin{ex}\label{ex:a=e=1}
Let $a, b$ be multiplicatively independent integers at least $2$.
Let $X = \P^{2}_{\Q}$ and 
\[
f \colon \P^{2}_{\Q} \longrightarrow \P^{2}_{\Q}, (X_{0}:X_{1}:X_{2}) \mapsto (aX_{0}:bX_{1}:X_{2}). 
\]
Let $Y = \{(1:1:1)\}$ with reduced scheme structure and let $x = (1:1:1)$.
Since $f$ is an automorphism on $\P^{2}$, we get $e(Y)=1= \alpha_{f}(x)$.
As $f^{n}(x) = (a^{n}:b^{n}:1)$, the $f$-orbit of $x$ is Zariski dense and
\begin{align*}
&\lim_{n \to \infty} \frac{h_{Y}(f^{n}(x))}{h_{\rm naive}(f^{n}(x))}  \\
&= \lim_{n \to \infty}  \frac{1}{\log \max\{ |a^{n}|,  |b^{n}|, 1\}}  \biggl( \log \frac{ \max\{ |a^{n}|, |b^{n}|,1\}}{\max\{ |a^{n}-1|, |b^{n} -1|  \}} \\
 & \qquad \qquad\qquad\qquad\qquad \qquad\qquad \qquad \qquad  +  \log \gcd(a^{n}-1,b^{n}-1) \biggr) \\
 & = 0
 \end{align*}
 again by \cite{bcz}.

\end{ex}

These examples are generalized to \cref{prop:Gmcase} using a generalization of \cite{bcz}.

Now we prove the main theorem in this section.

\begin{thm}\label{thm:vojta-hYvshH}
Let $X$ be a nice variety over $K$.
Let $f \colon X \longrightarrow X$ be a surjective morphism.
Let $Y \subset X$ be a closed subscheme of codimension at least two.
Let $x \in X(K)$ and suppose $e(Y) < \alpha_{f}(x)$.

Assume Vojta's conjecture \cref{conj:vojta} (for blow ups of $X$).
Fix ample height function $h_{H}$ on $X$ and height $h_{Y}$ associated with $Y$.
Then for any $\e>0$, there is a proper closed subset $Z_{\e} \subset X$ such that
\begin{align*}
\left\{ f^{n}(x)\  \middle|\  \frac{h_{Y}(f^{n}(x))}{h_{H}(f^{n}(x))} \geq \e  \right\} \subset Z_{\e}.
\end{align*}
In particular, if $O_{f}(x)$ is generic (i.e. its intersection with every proper closed subset is finite), then we have
\begin{align*}
\lim_{n \to \infty }\frac{h_{Y}(f^{n}(x))}{h_{H}(f^{n}(x))}  = 0.
\end{align*}

\end{thm}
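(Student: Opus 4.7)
The idea is to apply \cref{prop:vojta-ctversion} not to $Y$ itself---which would only yield a uniform upper bound on the ratio---but to the iterated preimage $f^{-n}(Y)$ for suitably chosen large $n$, trading the (possibly severe) decay of $\ct(X, f^{-n}(Y))$ against the exponential growth of $h_H$ along the $f$-orbit. Note that $e_{f^n}(y)\geq 1$ always, so the hypothesis forces $\alpha_f(x)>e(Y)\geq 1$, and in particular \cref{prop:growthht} is applicable.

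Fix $\beta$ with $e(Y)<\beta<\alpha_f(x)$. The main technical step---and what I expect to be the hard part of the argument---is to establish a lower bound of the form
\[
\ct\bigl(X, f^{-n}(Y)\bigr) \geq c_0\, \beta^{-n}
\]
for some constant $c_0>0$ independent of $n$, valid for all sufficiently large $n$. By the definition of $e(Y)$ (and \cref{rmk:prope-}), for $n\gg 0$ one has $e_{f^n}(y)\leq \beta^n$ for every $y\in f^{-n}(Y)$. Starting from a fixed log resolution of $(X,Y)$ and tracking how pulling back the defining ideal $\mathcal{I}_Y$ through $f^n$ inflates the multiplicities along exceptional and boundary components---bounded at each base point by $e_{f^n}$---one should be able to construct a log resolution of $(X, f^{-n}(Y))$ in which the discrepancy coefficients governing $\ct$ grow no faster than $\beta^n$; combined with \cref{lem:ct-vs-lct}, this yields the claimed bound. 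This is the delicate geometric part where most of the work will lie.

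Granted the threshold bound, $f^{-n}(Y)$ has codimension at least two, so \cref{prop:vojta-ctversion} applied to $f^{-n}(Y)$ with parameter $\delta>0$ produces a proper Zariski closed subset $W_n\subset X$ such that for all $z\in(X\setminus W_n)(K)$,
\[
c_0 \beta^{-n}\, h_Y(f^n(z)) \leq \delta\, h_H(z) - h_{K_X}(z) + O(1),
\]
using the functorial identity $h_{f^{-n}(Y)}(z)=h_Y(f^n(z))+O(1)$. Fixing a constant $C$ with $|h_{K_X}|\leq C h_H+O(1)$ and rearranging yields
\[
h_Y(f^n(z)) \leq \frac{(\delta+C)\beta^n}{c_0}\, h_H(z)+O(1).
\]

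Substitute $z=f^m(x)$ and set $N=n+m$. By \cref{prop:growthht}, $h_H(f^{n+m}(x))/h_H(f^m(x))\to \alpha_f(x)^n$ as $m\to\infty$; hence for $m$ sufficiently large with $f^m(x)\notin W_n$,
\[
\frac{h_Y(f^N(x))}{h_H(f^N(x))} \leq \frac{2(\delta+C)}{c_0}\left(\frac{\beta}{\alpha_f(x)}\right)^{n}.
\]
Since $\beta/\alpha_f(x)<1$, one chooses $n$ (and then $\delta$) so that the right-hand side is less than $\e$. Set
\[
Z_\e := f^n(W_n) \cup F,
\]
where $F$ is the finite set of orbit points $f^N(x)$ corresponding to $N$ below the threshold beyond which the asymptotic estimate takes effect. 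Since $\dim f^n(W_n)\leq \dim W_n<\dim X$, $f^n(W_n)$ is a proper closed subset, and adjoining the finite set $F$ preserves this. Any $f^N(x)$ with $h_Y(f^N(x))/h_H(f^N(x))\geq \e$ either lies in $F$ or has $m=N-n$ with $f^m(x)\in W_n$, so $f^N(x)\in f^n(W_n)\subset Z_\e$. The ``in particular'' statement is immediate: a generic orbit meets $Z_\e$ only finitely often, so for every $\e$ the ratio is eventually less than $\e$.
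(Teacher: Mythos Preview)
Your approach is essentially the same as the paper's: apply \cref{prop:vojta-ctversion} to $f^{-k}(Y)$ for a fixed large $k$, use functoriality $h_{f^{-k}(Y)}=h_Y\circ f^k+O(1)$, and compare the decay $\beta^{-k}$ against the orbit growth $\alpha_f(x)^{-k}$ via \cref{prop:growthht}. The one point worth noting is that the threshold lower bound you flag as ``the hard part'' is not reproved here but is imported from the companion paper \cite[Corollary 7.13]{ma20}, which gives $\lct(X,f^{-k}(Y))\geq (e(Y)+\e')^{-k}$ directly; combining this with \cref{lem:ct-vs-lct} yields exactly your bound with $c_0=1/2$, so you need not construct log resolutions by hand.
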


\begin{rmk}
Dynamical Mordell-Lang conjecture for $f$ says the genericness of $O_{f}(x)$ is equivalent to the Zariski density of $O_{f}(x)$.
\end{rmk}

\begin{rmk}
A very similar result is proven in \cite{huang}.
Huang proved the same conclusion for split self-morphisms on $\P^{1} \times \P^{1}$ assuming Vojta's conjecture.
His assumption on ``$Y$'' is slightly weaker than ``$e(Y) < \alpha_{f}(x)$'' as 
the map has a very concrete form.
\end{rmk}

\begin{proof}[Proof of \cref{thm:vojta-hYvshH}]

Let us fix an arbitrary $\e>0$.

Since $e(Y) < \alpha_{f}(x)$, there is a positive real number $\e'>0$ such that 
$e(Y) + \e' < \alpha_{f}(x)$.
By \cite[Corollary 7.13]{ma20},
we have
\begin{align*}
\lct(X, f^{-k}(Y) \geq \frac{1}{(e(Y) + \e')^{k}}
\end{align*}
for $k \gg 0$. Here $f^{-k}(Y)$ is the scheme theoretic inverse image.
Fix a large $k$ such that the above inequality holds and 
\begin{align*}
\frac{(e(Y)+\e')^{k}}{ \alpha_{f}(x)^{k}} < \e.
\end{align*}

By \cref{prop:vojta-ctversion} (applied to $(X, f^{-k}(Y))$ and ``$\e=1$'') and \cref{lem:ct-vs-lct},
there is a proper closed subset $Z \subset X$ such that
\begin{align*}
h_{f^{-k}(Y)}( y ) \leq 2(e(Y) + \e')^{k}h_{H-K_{X}}(y)
\end{align*}
for all $y \in (X \setminus Z)(K)$.

Thus we get
\begin{align*}
h_{Y}(f^{n}(x)) &\leq h_{f^{-k}(Y)}(f^{n-k}(x)) + C_{1}\\
& \leq 2(e(Y) + \e')^{k}h_{H-K_{X}}(f^{n-k}(x)) + C_{1}
\end{align*}
if $f^{n-k}(x) \notin Z$.
Here the first inequality follows from functoriality of height functions, and $C_{1}$ is a positive constant that is independent of $n$.
Dividing by $h_{H}(f^{n}(x))$, we get
\begin{align*}
\frac{h_{Y}(f^{n}(x))}{h_{H}(f^{n}(x))} \leq \frac{2(e(Y) + \e')^{k}h_{H-K_{X}}(f^{n-k}(x)) + C_{1}}{h_{H}(f^{n}(x))}.
\end{align*}

Since $ \alpha_{f}(x) > 1$ and $H$ is ample, by \cref{prop:growthht}, we have
\begin{align*}
C_{2} n^{a} \alpha_{f}(x)^{n} \leq h_{H}(f^{n}(x)) \leq C_{3} n^{a} \alpha_{f}(x)^{n} ;\\
h_{H-K_{X}}(f^{n}(x)) \leq C_{4} n^{a} \alpha_{f}(x)^{n}
\end{align*}
for some integer $a\geq 0$ and for $n \gg 0$, where $C_{2}, C_{3}, C_{4}$ depend only on $X, f, H,$ and $x$.

Thus for $n$ with $f^{n-k}(x) \notin Z$ and $n \gg 0$, 
\begin{align*}
\frac{h_{Y}(f^{n}(x))}{h_{H}(f^{n}(x))} &\leq \frac{2(e(Y) + \e')^{k} C_{4}(n-k)^{a} \alpha_{f}(x)^{n-k}  }{C_{2} n^{a} \alpha_{f}(x)^{n}} + \frac{C_{1}}{h_{H}(f^{n}(x))}\\
& \leq \frac{2C_{4}}{C_{2}} \e +  \frac{C_{1}}{h_{H}(f^{n}(x))}.
\end{align*}
Since $C_{2}, C_{4}$ are independent of $\e$ and $C_{1}$ is independent of $n$, we are done.

\end{proof}

When the canonical divisor is $\Q$-linearly equivalent to an effective divisor (e.g. Abelian variety, Calabi-Yau variety, etc), 
Vojta's conjecture says the limit goes to zero as soon as the orbit is Zariski dense.

\begin{prop}\label{prop:hY-vs-hH-etale}
Let $X$ be a nice variety over $K$ and suppose $X_{ \overline{K}}$ has non-negative Kodaira dimension.
Let $f \colon X \longrightarrow X$ be a surjective morphism.
Let $Y \subset X$ be a closed subscheme of codimension at least two.

Assume Vojta's conjecture  \cref{conj:vojta} (for blow ups of $X$).
Fix ample height function $h_{H}$ on $X$ and height $h_{Y}$ associated with $Y$.
Let $x \in X(K)$ and suppose $O_{f}(x)$ is Zariski dense.
Then we have
\begin{align*}
\lim_{n \to \infty } \frac{h_{Y}(f^{n}(x))}{h_{H}(f^{n}(x))}  = 0.
\end{align*}
\end{prop}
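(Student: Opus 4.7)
The plan is to apply \cref{prop:vojta-ctversion} to $(X, Y)$, use the non-negative Kodaira dimension of $X$ to bound $-h_{K_X}$ from above outside a proper closed set, and then pass to the limit by showing that $O_f(x)$ meets any proper closed subset only finitely often. The last point will be handled via the theorem that every surjective endomorphism of a smooth projective variety with non-negative Kodaira dimension is \'etale, together with the Bell--Ghioca--Tucker dynamical Mordell--Lang theorem for \'etale maps.

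First, \cref{prop:vojta-ctversion} applied to the codimension-$\geq 2$ subscheme $Y$ provides, for each $\delta > 0$, a proper closed $Z_\delta \subset X$ with
\[
\ct(X, Y)\, h_Y(y) \leq \delta\, h_H(y) - h_{K_X}(y) + O(1)
\]
on $(X \setminus Z_\delta)(K)$; note $\ct(X, Y) > 0$ by \cref{lem:ct-vs-lct}. Since $\kappa(X_{\overline{K}}) \geq 0$, some multiple $mK_X$ is $\mathbb{Q}$-linearly equivalent to an effective divisor $E$ on $X_{\overline{K}}$, so $m\, h_{K_X} = h_E + O(1)$ and $h_E \geq -O(1)$ off $\Supp(E)$; the image $E' \subset X$ of $\Supp(E)$ is a proper closed subset, outside which $-h_{K_X}$ is bounded above. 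Writing $Z'_\delta := Z_\delta \cup E'$, we obtain
\[
h_Y(y) \leq \frac{\delta}{\ct(X, Y)}\, h_H(y) + C_\delta, \qquad y \in (X \setminus Z'_\delta)(K).
\]

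The final step is to verify that $f^n(x) \notin Z'_\delta$ for all but finitely many $n$ and to conclude. By Fujimoto--Nakayama, every surjective self-morphism of a smooth projective variety with non-negative Kodaira dimension is \'etale; hence $f$ is \'etale, and the Bell--Ghioca--Tucker DML theorem forces $\{n : f^n(x) \in Z'_\delta\}$ to be a finite union of arithmetic progressions. An infinite progression $\{a + kn : n \geq 0\}$ in this set would make the Zariski closure $V$ of $\{f^{a+kn}(x)\}_{n \geq 0}$ a proper $f^k$-invariant closed subset of $Z'_\delta$, and then $V \cup f(V) \cup \cdots \cup f^{k-1}(V)$ would be a proper $f$-invariant closed subset of $X$ containing a tail of $O_f(x)$, contradicting Zariski density. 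Hence only finitely many $n$ satisfy $f^n(x) \in Z'_\delta$. Moreover, infinitude of $O_f(x)$ together with Northcott's theorem forces $h_H(f^n(x)) \to \infty$, so dividing the above inequality by $h_H(f^n(x))$ yields $\limsup_n h_Y(f^n(x)) / h_H(f^n(x)) \leq \delta/\ct(X, Y)$; since $\delta > 0$ was arbitrary, the limit is $0$. The principal obstacle is this last step, where \'etaleness of $f$ and the DML theorem for \'etale maps are essential for promoting Zariski density to the generic-intersection property.
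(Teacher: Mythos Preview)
Your proof is correct and follows essentially the same route as the paper: apply \cref{prop:vojta-ctversion}, use $\kappa(X_{\overline K})\geq 0$ to bound $h_{K_X}$ from below off a proper closed set, invoke Fujimoto to deduce $f$ is \'etale, and use the \'etale Dynamical Mordell--Lang theorem to ensure the orbit eventually leaves the exceptional set. Your write-up is somewhat more explicit than the paper's in justifying that a Zariski-dense orbit meets proper closed sets only finitely often and that $h_H(f^n(x))\to\infty$, but the strategy is identical.
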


\begin{proof}
The assumption that $X_{ \overline{K}}$ has non-negative Kodaira dimension is equivalent to say that
$H^{0}(X, nK_{X}) \neq 0$ for some $n>0$, where $K_{X}$ is a canonical divisor of $X$.
In particular, $f$ is \'etale (cf.\ \cite[Lemma 2.3 (2)]{fuj} for example).
Also, height function $h_{K_{X}}$ associated with $K_{X}$ is bounded below outside some proper closed subset, say
\[
h_{K_{X}} \geq C
\]
on $(X \setminus Z)( \overline{K})$ for some $C \in \R$ and $Z \subsetneq X$.

Let $\e>0$ be an arbitrary positive number.
By \cref{prop:vojta-ctversion}, there is a proper closed subset $Z_{\e} \subset X$ such that
\begin{align*}
\ct(X,Y) h_{Y} \leq \e h_{H} - h_{K_{X}}
\end{align*}
on $(X \setminus Z_{\e})(K)$.
Thus we get
\begin{align*}
h_{Y} \leq \frac{1}{\ct(X,Y)} (\e h_{H} - C )
\end{align*}
on $( X \setminus (Z \cup Z_{\e}))(K)$.

Let $x \in X(K)$ be a point with Zariski dense $f$-orbit.
Since Dynamical Mordell-Lang conjecture is true for \'etale morphisms, there is a natural number $n_{0}$ such that %ref
\begin{align*}
f^{n}(x) \notin Z \cup Z_{\e}
\end{align*}
for $n \geq n_{0}$.
Then for $n \geq n_{0}$,
\begin{align*}
\frac{h_{Y}(f^{n}(x))}{h_{H}(f^{n}(x))}  \leq \frac{1}{ \ct(X,Y)} \left( \e -  \frac{C}{h_{H}(f^{n}(x))} \right).
\end{align*}
Since $h_{H}(f^{n}(x))$ goes to infinity and $\ct(X,Y)$ and $C$ are independent of $n$, we are done.

\end{proof}

\begin{rmk}
In the setting of \cref{prop:hY-vs-hH-etale}, $f$ is \'etale and therefore we have $e(Y)=1$ automatically.
In this case, the proposition says that the ratio $h_{Y}/h_{H}$ goes to zero on the orbit even if $ \alpha_{f}(x) = e(Y)=1$
(under Vojta's conjecture).
\end{rmk}

%\begin{que}
%Are there counter examples to \cref{conj:subvarht-vs-ampleht} such that $e(Y) = \alpha_{f}(x)>1$?
%\end{que}

Let us mention one unconditional results, which is a direct corollary of \cite{lev,yaswang}.

\begin{prop}\label{prop:Gmcase}
Let $Y \subset \G_{m}^{n}$ be a closed subscheme of codimension at least two.
Let $f \colon \G_{m}^{n} \longrightarrow \G_{m}^{n}$ be a surjective morphism.
Let us embed $\G_{m}^{n} \subset \P^{n}, (x_{1}, \dots, x_{n}) \mapsto (x_{1}:\cdots: x_{n}:1)$.
Let $ \overline{Y}$ be the closure of $Y$ in $\P^{n}$ and suppose 
\begin{align*}
(1:0: \cdots :0), (0:1:0:\cdots : 0), \dots, (0:\cdots : 0:1) \notin \overline{Y}.
\end{align*}
Fix height functions $h_{ \overline{Y}}$ associated with $ \overline{Y}$.
Let $h_{ {\rm naive}}$ be the naive height function on $\P^{n}$.
Then for every $x \in \G_{m}^{n}(K)$ with Zariski dense $f$-orbit, we have
\begin{align*}
\lim_{n \to \infty } \frac{h_{ \overline{Y}}(f^{n}(x))}{h_{\rm naive}(f^{n}(x))} = 0.
\end{align*}
\end{prop}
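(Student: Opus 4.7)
The plan is to extract the statement as a fairly immediate consequence of an unconditional gcd/Vojta-type theorem for tori due to Levin and Yasufuku--Wang, combined with the Dynamical Mordell--Lang theorem for endomorphisms of $\G_{m}^{n}$.

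First, I would reformulate the geometric hypothesis in the language of divisors at infinity. Let $D = H_{0} + H_{1} + \cdots + H_{n}$ be the boundary divisor of $\G_{m}^{n} \subset \P^{n}$, where the $H_{i}$ are the coordinate hyperplanes. The assumption that $\var Y$ avoids the coordinate vertices $(0:\cdots:0:1:0:\cdots:0)$ is exactly the statement that $\var Y$ meets every $n$-fold intersection $H_{i_{1}} \cap \cdots \cap H_{i_{n}}$ in the empty set; equivalently, $\var Y$ is in ``general position with respect to $D$'' in the sense required by the main Schmidt-subspace-type theorems of \cite{lev,yaswang}. Combined with $\codim \var Y \geq 2$, this is precisely the input those theorems demand.

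Next, I would quote the relevant theorem: for every $\e>0$ there exists a proper Zariski closed subset $Z_{\e}\subset \P^{n}$ (a finite union of translates of proper subtori, outside the exceptional contribution of $D$) such that
\[
h_{\var Y}(y) \leq \e\, h_{\rm naive}(y) + O(1)
\quad \text{for every } y \in (\G_{m}^{n}\setminus Z_{\e})(K).
\]
This is essentially the form in which Levin and Yasufuku--Wang prove the unconditional gcd estimate for subvarieties of codimension at least two in a torus; the codimension-two hypothesis plus the general-position hypothesis with $D$ is what makes the subspace theorem give the $\e h_{\rm naive}$ bound with an exceptional set that is a proper Zariski closed subset (rather than only a finite union of cosets coming from a single coordinate direction, as in the pure Bugeaud--Corvaja--Zannier setting).

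Finally, it remains to check that $O_{f}(x)$ eventually lies outside $Z_{\e}$. Every surjective self-morphism of $\G_{m}^{n}$ is a translate of a group endomorphism, i.e.\ of the form $y\mapsto \a\cdot y^{A}$ for some $\a\in\G_{m}^{n}(K)$ and an integer matrix $A$ with $\det A\neq 0$. For such maps, the Dynamical Mordell--Lang theorem is known (it reduces to the classical Mordell--Lang theorem in $\G_{m}^{n}$ after unraveling the action). Hence, since $O_{f}(x)$ is Zariski dense, $O_{f}(x)\cap Z_{\e}$ is finite, so $f^{n}(x)\notin Z_{\e}$ for all $n \gg 0$. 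The displayed inequality then yields
\[
\limsup_{n\to\infty} \frac{h_{\var Y}(f^{n}(x))}{h_{\rm naive}(f^{n}(x))} \leq \e,
\]
using that $h_{\rm naive}(f^{n}(x)) \to \infty$ along a Zariski dense orbit (which follows from $\a_{f}(x)>1$, itself a consequence of Zariski density for $\G_{m}^{n}$-endomorphisms). Since $\e>0$ was arbitrary, the limit is zero.

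The main obstacle, as I see it, is cleanly matching the hypothesis ``$\var Y$ contains no coordinate vertex'' with the precise general-position/intersection hypothesis imposed in \cite{lev} and \cite{yaswang}; these papers state their results in slightly different coordinates, so some bookkeeping is required, but no new ideas beyond what is already in those papers. Once the theorem is applied, the dynamical input is only the well-known Mordell--Lang for $\G_{m}^{n}$, so nothing beyond that is needed.
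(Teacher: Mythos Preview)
Your approach is essentially the same as the paper's: invoke the unconditional gcd theorem of Wang--Yasufuku (and Levin), then use the Dynamical Mordell--Lang theorem for \'etale self-maps to escape the exceptional set. The only point you gloss over is that the cited theorems apply to $S$-integral points of $\G_{m}^{n}$, not to all $K$-points; the paper handles this by first observing that for a sufficiently large finite set $S$ of places one has $f^{n}(x)\in\G_{m}^{n}(\O_{K,S})$ for all $n$, which follows since $f$ is a translated isogeny and hence the orbit lies in a finitely generated subgroup. Also, your justification that $h_{\rm naive}(f^{n}(x))\to\infty$ via $\alpha_{f}(x)>1$ is not quite right when $f$ is an automorphism, but the conclusion still holds by Northcott together with DML (the orbit is generic, so only finitely many iterates can have height below any fixed bound).
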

\begin{proof}
For sufficiently large finite set of places $S \subset M_{K}$, we have
\begin{align*}
f^{n}(x) \in \G_{m}^{n}(\O_{K,S}).
\end{align*}
Thus the statement follows from \cite[Theorem 1.3]{yaswang} and Dynamical Mordell-Lang conjecture for $f$, which is \'etale.
\end{proof}

As we mention in \cref{rmk:onthecond}, it might be reasonable to expect that the condition $e(Y)< \alpha_{f}(x)$ can be weaken.
Let us propose one possibility as a question.

\begin{que}\label{Q:ht-ratio-higher-dyn.deg}
Let $X$ be a nice variety over $K$.
Let $f \colon X \longrightarrow X$ be a surjective morphism.
Let $Y \subset X$ be a closed subscheme of codimension at least two.
Let $H$ be an ample divisor on $X$.
Fix global height functions $h_{Y}$ and $h_{H}$.
Let $x \in X( \overline{K})$ be a point.
If $e(Y) < d_{\codim Y}(f)$ and $O_{f}(x)$ is Zariski dense, then we have
\begin{align*}
\lim_{n \to \infty} \frac{h_{Y}(f^{n}(x))}{h_{H}(f^{n}(x))} = 0.
\end{align*}
Here, $d_{\codim Y}(f)$ is the $\codim Y$-th dynamical degree of $f$.
\end{que}

We do not know any counter examples to this question so far, 
but it seems we need completely new techniques to approach this question.

\section{An application of \cref{conj:subvarht-vs-ampleht} to Dynamical Mordell-Lang conjecture}\label{sec:appl-to-dml}

Let us illustrate how \cref{conj:subvarht-vs-ampleht} is strong by deducing Dynamical Mordell-Lang conjecture for $\P^{2}_{\QQ}$.

\begin{prop}\label{prop:DMLP2}
Assume \cref{conj:subvarht-vs-ampleht}.
Let $f \colon \P^{2}_{\QQ} \longrightarrow \P^{2}_{\QQ}$ be a non-isomorphic surjective morphism.
Let $Z \subset \P^{2}_{\QQ}$ be a closed subscheme.
Then for any $x \in \P^{2}(\QQ)$, the set
\[
\{ n \geq 0 \mid f^{n}(x) \in Z\}
\]
is the union of a finite set and finitely many arithmetic progressions.
\end{prop}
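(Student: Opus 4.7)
The plan is to reduce to the case where $Z$ is an irreducible curve and $O_{f}(x)$ is Zariski dense in $\P^{2}_{\QQ}$, and then to apply Conjecture~\ref{conj:subvarht-vs-ampleht}---in particular its local-height form for divisors---to control the return set $S:=\{n\geq 0: f^{n}(x)\in Z\}$.

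I would first make standard reductions. If $Z$ is reducible, treat each irreducible component separately and take the union of the resulting return sets. If $\dim Z=0$, then $S$ is easily analysed: if $x$ is preperiodic then $O_{f}(x)$ is finite and the conclusion is trivial, whereas if $x$ is not preperiodic, an infinite intersection of $O_{f}(x)$ with a finite set would force preperiodicity, a contradiction. If $Z=\P^{2}_{\QQ}$, then $S=\{n\geq 0\}$. If $O_{f}(x)$ is not Zariski dense, $\overline{O_{f}(x)}$ is a curve or a finite set that $f$ carries into itself, and classical DML for dominant endomorphisms of curves finishes the case. Thus we may assume $Z$ is an irreducible curve, $O_{f}(x)$ is Zariski dense in $\P^{2}_{\QQ}$, and $x$ is not preperiodic; in particular $\alpha_{f}(x)=d_{1}(f)=\deg f=:d\geq 2$.

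I now split according to the $f$-dynamics on $Z$ as a curve. If $f^{p}(Z)=Z$ for some minimal $p\geq 1$, set $g=f^{p}$. Since $g(Z)\subseteq Z$, the set $S_{r}:=\{q\geq 0: g^{q}(f^{r}(x))\in Z\}$ is upward closed for each $r\in\{0,\dots,p-1\}$, hence is empty or a terminal segment. Writing $n=pq+r$, one has $S=\bigsqcup_{r=0}^{p-1}(pS_{r}+r)$, a finite union of arithmetic progressions of common difference $p$, settling this case. For the remaining case $f^{k}(Z)\neq Z$ for every $k\geq 1$, I would invoke the local-height form of Conjecture~\ref{conj:subvarht-vs-ampleht} applied to the divisor $Z$: under the hypothesis $e(Z)<\alpha_{f}(x)=d$, it yields
\[
\lim_{n\to\infty}\frac{\l_{Z,v}(f^{n}(x))}{h_{H}(f^{n}(x))}=0
\]
for every place $v$. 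Since $f^{n}(x)\in Z$ forces $\l_{Z,v}(f^{n}(x))=+\infty$ at some place, this immediately rules out all but finitely many $n\in S$, so $S$ is finite. The entire proposition thus reduces to verifying $e(Z)<d$ for every non-$f$-periodic irreducible curve $Z\subset\P^{2}_{\QQ}$.

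The main obstacle I expect is that $e(Z)$ can equal $d$ even when $Z$ itself is not $f$-periodic---by the characterisation of $e_{f,-}$ via $e_{f,+}$ at periodic scheme points (\cref{rmk:prope-}), this occurs precisely when $Z$ meets a periodic subvariety of $\P^{2}_{\QQ}$ along which $f$ is totally ramified. To handle this, one can either (i) excise the finitely many exceptional intersection points from $Z$ and apply the conjecture to a modification, with care for local-height estimates near the exceptional set, or (ii) fall back on the codimension-two part of Conjecture~\ref{conj:subvarht-vs-ampleht} applied to $Y_{k}:=Z\cap f^{-k}(Z)$ (which has codimension two in $\P^{2}_{\QQ}$ since $Z\not\subseteq f^{-k}(Z)$), obtaining that $\{n:\,n,n+k\in S\}$ is finite for each $k\geq 1$, which rules out infinite arithmetic progressions in $S$ and, combined with the non-periodicity of $Z$, forces $S$ to be finite. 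Verifying $e(Y_{k})<d$ (respectively $e(Z)<d$) uniformly, exploiting that the totally ramified periodic locus in $\P^{2}_{\QQ}$ is very constrained, is the delicate step; this fits naturally within the framework of \S\ref{sec:invariant-e} and \S\ref{sec:hY-vs-hH}.
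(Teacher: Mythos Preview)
Your reductions and the periodic-$Z$ case are fine, but the non-periodic case has a real gap on both branches you propose.

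First, the conjecture being assumed (\cref{conj:subvarht-vs-ampleht}) is only the \emph{global} height statement for closed subschemes of codimension $\geq 2$; it contains no local-height statement for divisors. So approach (i) invokes something not in the hypotheses, and in any case you correctly note that $e(Z)<d$ can fail. Second, approach (ii) does yield, for each fixed $k$, that $\{n:\,n,n+k\in S\}$ is finite, but this does not force $S$ to be finite: any lacunary set such as $\{2^{m}\}_{m\geq 0}$ satisfies that condition for every $k$. ``Non-periodicity of $Z$'' adds nothing here. Moreover, $e(Y_{k})<d$ can fail for exactly the same reason $e(Z)<d$ can: if $Z$ passes through a periodic point lying on a curve along which $f$ is totally ramified, then so does $Y_{k}=Z\cap f^{-k}(Z)$.

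The paper's proof avoids both problems with a single construction. Write $Z=(\f_{0}=0)$ and set $\Sigma=\{z\in Z(\QQ)\mid e_{f,-}(z)\geq d\}$, a closed subset of $Z$ by upper semicontinuity. If $\Sigma=Z(\QQ)$, then any $f^{n}(x)\in Z$ would lie on a proper $f$-periodic subvariety (by \cref{rmk:prope-}), contradicting density, so $O_{f}(x)\cap Z=\emptyset$. Otherwise $\Sigma$ is finite, and one chooses a \emph{general} homogeneous $\f_{1}$ of the same degree with $(\f_{0}=\f_{1}=0)\cap\Sigma=\emptyset$. The indeterminacy scheme $I_{\f}:=(\f_{0}=\f_{1}=0)$ of $\f=(\f_{0}:\f_{1})\colon\P^{2}\dashrightarrow\P^{1}$ is then a codimension-two subscheme with $e(I_{\f})<d=\alpha_{f}(x)$ \emph{by construction}, so \cref{conj:subvarht-vs-ampleht} applies to $I_{\f}$. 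The height identity
\[
h_{\P^{1},\mathrm{naive}}(\f(f^{n}(x)))=\deg\f_{0}\cdot h_{\P^{2},\mathrm{naive}}(f^{n}(x))-h_{I_{\f}}(f^{n}(x))
\]
then shows $h_{\P^{1},\mathrm{naive}}(\f(f^{n}(x)))\to\infty$, so $\f(f^{n}(x))=(0{:}1)$ (equivalently $f^{n}(x)\in Z\setminus I_{\f}$) only finitely often; since $I_{\f}$ is finite, $O_{f}(x)\cap Z$ is finite. The point you were missing is that one does not need $e(Z)<d$ or $e(Z\cap f^{-k}(Z))<d$: one \emph{manufactures} a codimension-two locus $I_{\f}\subset Z$ that deliberately avoids the bad set $\Sigma$, and uses the rational map $\f$ to translate the conjecture for $I_{\f}$ into the desired finiteness for $Z$.
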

\begin{proof}
First of all, if $Z$ has dimension zero or two, then the statement is trivial.
Thus we assume $Z$ is one dimensional.
We may also assume $Z$ is irreducible.
Note that if the orbit $O_{f}(x)$ is not Zariski dense, the statement is trivial again
(the closure of $O_{f}(x)$ intersects with $Z$ at finitely many points or has $Z$ as an irreducible component).
Therefore we may assume $O_{f}(x)$ is Zariski dense, and in particular we have $ \alpha_{f}(x) = d>1$ where $d$ is the first dynamical degree of $f$.
In this case we need to show that $O_{f}(x) \cap Z$ is finite.
Write $Z = (\f_{0} = 0)$ for some non-zero irreducible homogeneous polynomial $\f_{0}$.

Let us consider the set
\begin{align*}
\Sigma = \{z \in Z(\QQ) \mid e_{f,-}(z) \geq \alpha_{f}(x)=d \}.
\end{align*}
By the upper semicontinuity of $e_{f,-}$ (cf.\ \cref{rmk:prope-}),
$ \Sigma$ is (the $\QQ$-points of) a closed subset of $Z$. 
If $ \Sigma=Z(\QQ)$, we have $O_{f}(x) \cap Z=  \emptyset$.
Indeed, if $f^{n}(x) \in Z$ for some $n$, then $e_{f,-}(f^{n}(x)) \geq d >1$ and this implies there is a proper subvariety $P \subset \P^{2}_{\QQ}$
which is $f$-periodic and $f^{n}(x) \in P$ (cf.\ \cref{rmk:prope-}). 
This contradicts with the Zariski density of $O_{f}(x)$.
Therefore, we may assume $ \Sigma$ is a finite set.
Let $\f_{1}$ be a general homogeneous polynomial with $\deg \f_{1} = \deg \f_{0}$ so that
\[
\Sigma \cap (\f_{0} = \f_{1} =0) =  \emptyset.
\]
Consider the rational map
\begin{align*}
\f = (\f_{0} : \f_{1}) \colon \P^{2}_{\QQ} \dashrightarrow \P^{1}_{\QQ}
\end{align*}
defined by these polynomials.
Let $I_{\f} = (\f_{0} = \f_{1} =0)$ be the closed subscheme defined by $\f_{0}$ and $\f_{1}$ (the support is the indeterminacy locus of $\f$).
By the choice of $\f_{1}$, we have $e(I_{\f}) < \alpha_{f}(x)$.
Note also that $f^{n}(x) \notin I_{\f}$ for sufficiently large $n$ since $I_{\f}$ is finite.  
Therefore, for the naive heights $h_{\P^{N}, {\rm naive}}$ on projective spaces and a suitable choice of $h_{I_{\f}}$, we get
\begin{align*}
h_{\P^{1}, {\rm naive}}(\f(f^{n}(x))) &= \deg \f_{0} h_{\P^{2}, {\rm naive}}(f^{n}(x)) - h_{I_{\f}}(f^{n}(x))\\
& = h_{\P^{2}, {\rm naive}}(f^{n}(x)) \left( \deg \f_{0} - \frac{h_{I_{\f}}(f^{n}(x))}{h_{\P^{2}, {\rm naive}}(f^{n}(x))}  \right)
\end{align*}
and this goes to infinity by \cref{conj:subvarht-vs-ampleht}.
This implies there are at most finitely many $n$ such that $f^{n}(x) \notin I_{\f}$ and $\f(f^{n}(x)) = (0:1)$.
Since $Z = (\f_{0}=0)$, we conclude that there are at most finitely many $n$ such that $f^{n}(x) \in Z$.
\end{proof}

\section{Primitive prime divisors in orbits}\label{sec:ppd}

In this section, we discuss existence of primitive prime divisors of coordinates of higher dimensional arithmetic dynamical systems.
Let $K$ be a number field.

\begin{defn}\label{def:ppd}
Let $X$ be a nice variety over $K$.
Let $f \colon X \longrightarrow X$ be a surjective morphism.
Let $D$ be an effective divisor on $X$.
Fix
\begin{itemize} 
\item local height function $\l_{D}$ associated with $D$;
\item a finite set $S \subset M_{K}$ containing all archimedean absolute values.
\end{itemize}
Let $x \in X(K)$ be a point.
A primitive prime divisor of $f^{n}(x)$ outside $S$ with respect to $\l_{D}$ is an absolute value $v \in M_{K} \setminus S$ such that
\begin{align*}
&\l_{D,v}(f^{n}(x)) > 0;\\
&\l_{D,v}(f^{m}(x)) \leq 0 \quad \text{for $0 \leq m < n$}.
\end{align*}
\end{defn}

We refer to \cite{sil13} and \cite[\S 20]{adsurv} for a justification of this definition and the history and survey on
the study of primitive prime divisors in arithmetic dynamics.

\begin{thm}\label{thm:ppd1}
Let $X$ be a nice variety over $K$.
Let $f \colon X \longrightarrow X$ be a surjective morphism.
Let $D$ be an effective ample divisor on $X$.
Take/fix
\begin{itemize} 
\item local height function $\l_{D}$ associated with $D$;
\item a finite set $S \subset M_{K}$ containing all archimedean absolute values.
\end{itemize}
Let $x \in X(K)$ be a point such that $ \alpha_{f}(x) > e(D)$.
Suppose also that $O_{f}(x) \cap D=  \emptyset$.
Assume Vojta's conjecture with truncated counting function (\cref{conj:vojtawithcounting}).
Then there is an integer $l$ and a proper closed subset $Z \subset X$ such that
\begin{align*}
\left\{ f^{n}(x)\  \middle|\  \txt{none of $f^{n}(x), \dots, f^{n+l}(x)$ \\ has a primitive prime divisor outside $S$ w.r.t $\l_{D}$}  \right\} \subset Z.
\end{align*}
\end{thm}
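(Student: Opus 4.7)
The overall plan is to sandwich $\sum_{j=0}^{l} N^{(1)}_{K,S}(D, f^{n+j}(x))$ between a Vojta-supplied lower bound of order $\alpha^{l+1}\cdot h_D(f^n(x))$ and a combinatorial upper bound of order $(l+1)\cdot\sum_{m<n} h_D(f^m(x))$ coming from the no-primitive-prime hypothesis, and to exploit the asymmetry ``$\alpha^{l+1}$ vs.\ $l+1$'' to force a contradiction for $l, n$ large. The assumption $O_{f}(x)\cap D=\emptyset$ keeps all relevant heights and counting functions finite throughout. If $O_{f}(x)$ is not Zariski dense, take $Z=\overline{O_{f}(x)}$; otherwise set $\alpha := \alpha_{f}(x) > e(D) \geq 1$, pick $\epsilon' > 0$ with $e(D) + \epsilon' < \alpha$, and by \cite[Cor.~7.13]{ma20} (as invoked in the proof of \cref{thm:vojta-hYvshH}) fix $k$ large enough that $c_k := \lct(X, f^{-k}(D)) \geq (e(D)+\epsilon')^{-k}$; then $c_k\alpha^k \to \infty$ as $k\to\infty$.

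Apply \cref{prop:vojta-truncated-lct} to $f^{-k}(D)$ with an auxiliary $\epsilon_0 > 0$ (to be fixed later), obtaining a proper closed $Z_0 \subset X$. Functoriality of heights ($h_{f^{-k}(D)} = h_D\circ f^k + O(1)$, and likewise for $N^{(1)}$ up to an $O(1)$-perturbation of the $\min$) turns the substitution $y = f^{n+j-k}(x)$ (for $n\geq k$ and $j\in[0,l]$) into the following bound, valid whenever $f^{n+j-k}(x)\notin Z_0$:
\begin{align*}
N^{(1)}_{K,S}(D, f^{n+j}(x)) \geq c_k h_D(f^{n+j}(x)) + h_{K_X}(f^{n+j-k}(x)) - \epsilon_0 h_H(f^{n+j-k}(x)) - O(1).
\end{align*}

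Assume for contradiction that none of $f^{n}(x),\ldots,f^{n+l}(x)$ has a primitive prime divisor outside $S$. An induction on $j\in[0,l]$ shows every $v\notin S$ with $\lambda_{D,v}(f^{n+j}(x))>0$ also satisfies $\lambda_{D,v}(f^{m}(x))>0$ for some $m<n$. Using that (at finite places, outside a bounded exceptional set) $\lambda_{D,v}(\cdot)>0$ implies $\lambda_{D,v}(\cdot)\geq\log\sharp(\O_K/\p_v)^{1/[K:\Q]}$, a union bound together with the trivial $N^{(1)}\leq h_D+O(1)$ yields
\begin{align*}
\sum_{j=0}^{l} N^{(1)}_{K,S}(D, f^{n+j}(x)) \leq (l+1)\sum_{m=0}^{n-1} h_D(f^{m}(x)) + O(ln).
\end{align*}
Summing the preceding lower bound over $j\in[0,l]$, combining with this upper bound, and plugging in \cref{prop:growthht} ($h_D(f^{n}(x))\asymp n^{a}\alpha^{n}$ for $D$ ample and $\alpha>1$, with $|h_{K_X}(f^{n}(x))|, h_H(f^{n}(x))\ll n^{a}\alpha^{n}$), dividing through by $n^{a}\alpha^{n}/(\alpha-1)$ gives
\begin{align*}
\bigl(c_k c_1 - O(\alpha^{-k}) - O(\epsilon_0\alpha^{-k})\bigr)(\alpha^{l+1}-1) \leq (l+1)c_2 + o_{n\to\infty}(1).
\end{align*}
First fix $k$ large and then $\epsilon_0$ small, so that the left-hand coefficient is $\geq c_k c_1/2>0$; since $\alpha^{l+1}$ grows exponentially in $l$ while $l+1$ grows only linearly, fix $l$ so large that the left side beats the right, and then $N_0$ so large that the $o(1)$ term is negligible for $n\geq N_0$.

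Finally, define $Z := \bigcup_{j=0}^{l}\tilde Z_j \cup \{f^{n}(x):n<N_0\}$, where $\tilde Z_j := f^{k-j}(Z_0)$ for $0\leq j\leq k$ (closed image under the proper morphism $f^{k-j}$, proper since $\dim f^{k-j}(Z_0)\leq\dim Z_0<\dim X$) and $\tilde Z_j := f^{-(j-k)}(Z_0)$ for $j>k$. Then $Z$ is a proper closed subset of $X$, and $f^{n}(x)\notin Z$ forces both $n\geq N_0$ and $f^{n+j-k}(x)\notin Z_0$ for every $j\in[0,l]$, hence yields a primitive prime divisor in the window $[n,n+l]$. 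The main obstacle is the bookkeeping in the contradiction step: the lower and upper bounds share the same leading growth $n^{a}\alpha^{n}/(\alpha-1)$, so the contradiction is won only through the ratio $\alpha^{l+1}/(l+1)$, which demands that the leading coefficient $c_k c_1$ on the lower bound survive the $h_{K_X}$ and $\epsilon_0 h_H$ error terms (suppressed by only a factor $\alpha^{-k}$); this forces the careful, ordered, interdependent choice of $(k,\epsilon_0,l,N_0)$.
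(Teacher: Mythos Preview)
Your argument is correct and rests on the same three inputs as the paper's proof: the log-canonical-threshold lower bound $\lct(X,(f^k)^*D)\geq (e(D)+\epsilon')^{-k}$ from \cite[Corollary~7.13]{ma20}, the Vojta inequality of \cref{prop:vojta-truncated-lct} applied to $(f^k)^*D$, and the height growth of \cref{prop:growthht}. The packaging, however, differs. The paper works with a single quantity
\[
B_{n,l}=\sum_{\substack{v\notin S\\ \lambda_{D,v}(f^n(x))>0\\ \lambda_{D,v}(f^m(x))\le 0\ \forall m\le n-l-1}}\lambda_{D,v}(f^n(x)),
\]
applies Vojta once at the top of a \emph{backward} window $[n-l,n]$, and subtracts $\sum_{m\le n-l-1}h_D(f^m(x))$; the gain comes from $\alpha^{-l}$ suppressing this tail. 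You instead sum $N^{(1)}$ over a \emph{forward} window $[n,n+l]$, use an induction to trace every contributing place back to some $m<n$, and win via the ratio $\alpha^{l+1}/(l+1)$. In fact your summation over $j$ is not needed: keeping only $j=l$ already gives $c_k\alpha^{l}$ on the lower side versus $O(1)$ on the upper side, which after the reindexing $n\mapsto n-l$ is exactly the paper's computation. Your explicit construction of $Z$ via forward images and preimages of $Z_0$ is a clean way to package what the paper leaves implicit; just make sure $N_0\ge k$ so that $f^{n+j-k}(x)$ is always defined.
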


\begin{proof}
Let $e = e(D)$.
Take a small $\e>0$ so that $e+ \e < \alpha_{f}(x)$.
There is a $k_{0}$ such that for $k \geq k_{0}$, we have
\[
\lct(X,(f^{k})^{*}D) \geq \frac{1}{(e+\e)^{k}}.
\]

Fix a global height $h_{D}$.
Let $H$ be an ample divisor on $X$, and let $K_{X}$ be a canonical divisor of $X$.
Fix global height functions $h_{H}$ and $h_{K_{X}}$.
By \cref{prop:vojta-truncated-lct} (applied to $\e=1$), for any $k \geq k_{0}$ there is a proper closed subset $Z_{k} \subset X$ such that
\begin{align*}
& N_{S}^{(1)}(D, f^{n}(x)) \\
& \geq N_{S}^{(1)}((f^{k})^{*}D, f^{n-k}(x)) - C_{1} \\
& \geq \lct(X, (f^{k})^{*}D) h_{(f^{k})^{*}D}(f^{n-k}(x)) + h_{K_{X}}(f^{n-k}(x)) - h_{H}(f^{n-k}(x)) - C_{1}\\
& \geq \frac{1}{(e+ \e)^{k}} h_{D}(f^{n}(x)) - h_{H-K_{X}}(f^{n-k}(x)) - C_{2}
\end{align*}
for all $n\geq k$ if $f^{n-k}(x) \notin Z_{k}$.
Here $C_{i}$ are constants that are independent of $n$ and $h_{H-K_{X}}$ is a height associated with $H-K_{X}$.

Let us consider the sum
\begin{align*}
B_{n,l} = \sum \l_{D,v}(f^{n}(x))
\end{align*}
where the sum runs over $v \in M_{K} \setminus S$ such that
\begin{align*}
&\l_{D,v}(f^{n}(x)) > 0;\\
& \l_{D,v}(f^{m}(x)) \leq 0 \quad \text{for $0 \leq m \leq n-l-1$}.
\end{align*}
Note that if $B_{n,l} > 0$, then one of $f^{n-l}(x), \dots f^{n}(x)$ has a primitive prime divisor outside $S$ with respect to $\l_{D}$.
We calculate
\begin{align*}
&B_{n,l} \\
& \geq N_{S}^{(1)}(D, f^{n}(x)) - \sum_{m = 0}^{n-l-1} \sum_{v \notin S} \l_{D,v}(f^{m}(x))\\
& \geq N_{S}^{(1)}(D, f^{n}(x)) -  \sum_{m = 0}^{n-l-1} h_{D}(f^{m}(x)) - C_{3}\\
& \geq \frac{1}{(e+ \e)^{k}} h_{D}(f^{n}(x)) - h_{H-K_{X}}(f^{n-k}(x))  -  \sum_{m = 0}^{n-l-1} h_{D}(f^{m}(x)) - C_{4}
\end{align*}
for $n\geq k$ if $f^{n-k}(x) \notin Z_{k}$.
Here $C_{i}$ are independent of $n$.

Since $ \alpha_{f}(x) > 1$ and $D$ is ample, by \cref{prop:growthht}, we have
\begin{align*}
C_{5} n^{a} \alpha_{f}(x)^{n} \leq h_{D}(f^{n}(x)) \leq C_{6} n^{a} \alpha_{f}(x)^{n} ;\\
h_{H-K_{X}}(f^{n}(x)) \leq C_{7} n^{a} \alpha_{f}(x)^{n}
\end{align*}
for $n \gg 0$, where $a\geq 0$ is an integer and $C_{5}, C_{6}, C_{7}$ are positive real numbers that are independent of $k$, $n$, and $l$.

Thus we get
\begin{align*}
&B_{n,l}\\
& \geq \frac{h_{D}(f^{n}(x))}{(e+\e)^{k}} \left( 1-  \frac{C_{7} (n-k)^{a} (e+\e)^{k} \alpha_{f}(x)^{n-k}}{C_{5} n^{a} \alpha_{f}(x)^{n}} 
- \sum_{m=0}^{n-l-1} \frac{C_{6} m^{a} \alpha_{f}(x)^{m} }{C_{5} n^{a} \alpha_{f}(x)^{n}} \right) - C_{4}\\
& \geq  \frac{h_{D}(f^{n}(x))}{(e+\e)^{k}} \left( 1- \frac{C_{7}}{C_{5}} \frac{(e+\e)^{k}}{ \alpha_{f}(x)^{k}} 
- \frac{C_{6}}{C_{5}}\frac{1}{ \alpha_{f}(x)^{l}-1} \right) - C_{4}.
\end{align*}
Now, first fix large $k$ so that
\begin{align*}
 \frac{C_{7}}{C_{5}} \frac{(e+\e)^{k}}{ \alpha_{f}(x)^{k}}  \leq \frac{1}{3}.
\end{align*}
Next take and fix a large $l$ so that
\begin{align*}
\frac{C_{6}}{C_{5}}\frac{1}{ \alpha_{f}(x)^{l}-1} \leq \frac{1}{3}.
\end{align*}
Then for $n \gg 0$ such that $f^{n-k}(x) \notin Z_{k}$, we have 
\begin{align*}
B_{n,l} \geq \frac{h_{D}(f^{n}(x))}{3(e+\e)^{k}} - C_{4}>0
\end{align*}
because $C_{4}$ is independent of $n$.
This proves the statement.

\end{proof}

\begin{que}
Are there examples where we actually cannot take $l=0$?
\end{que}

With one additional assumption on $D$, we can show that we can take $l=0$.

\begin{thm}\label{thm:ppd2}
Let $X$ be a nice variety over $K$.
Let $f \colon X \longrightarrow X$ be a surjective morphism.
Let $D$ be an effective ample divisor on $X$.
Take/fix
\begin{itemize} 
\item local height function $\l_{D}$ associated with $D$;
\item a finite set $S \subset M_{K}$ containing all archimedean absolute values.
\end{itemize}
Suppose $D \cap f^{-i}(D)$ has codimension two for every $i \geq 1$.
Let $x \in X(K)$ be a point such that $ \alpha_{f}(x) > e(D)$.
Suppose also that $O_{f}(x) \cap D =  \emptyset$.
Assume Vojta's conjecture with truncated counting function (\cref{conj:vojtawithcounting}).
Then there is a proper closed subset $Z \subset X$ such that
\begin{align*}
\left\{ f^{n}(x)\  \middle|\  \txt{$f^{n}(x)$ does not have\\ a primitive prime divisor outside $S$ w.r.t $\l_{D}$}  \right\} \subset Z.
\end{align*}

\end{thm}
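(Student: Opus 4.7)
The plan is to follow the template of \cref{thm:ppd1} but replace its crude per-step bound $h_{D}(f^{m}(x))$ on the non-primitive contribution with the sharper height $h_{Y_{n-m}}(f^{m}(x))$, where $Y_{i} := D \cap f^{-i}(D)$. Under the hypothesis, each $Y_{i}$ is a closed subscheme of codimension at least two, and since $Y_{i} \subset D$ one has $e(Y_{i}) \leq e(D)$. As in \cref{thm:ppd1}, first fix $\epsilon>0$ with $e:=e(D)$ satisfying $e+\epsilon<\alpha_{f}(x)$, and apply \cref{prop:vojta-truncated-lct} (together with \cite[Corollary 7.13]{ma20}) to obtain
\[
N_{S}^{(1)}(D, f^{n}(x)) \;\geq\; \frac{h_{D}(f^{n}(x))}{(e+\epsilon)^{k}} \;-\; h_{H-K_{X}}(f^{n-k}(x)) \;-\; C
\]
for sufficiently large fixed $k$ and all $n \gg 0$, provided $f^{n-k}(x)$ lies outside a proper closed subset $Z_{k}\subsetneq X$. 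Decomposing $N_{S}^{(1)}(D, f^{n}(x)) \leq B_{n,0} + (\text{non-primitive})$ and using the local-height identity $\lambda_{D\cap f^{-i}(D),v} = \min\{\lambda_{D,v},\lambda_{D,v}\circ f^{i}\} + O(1)$ at each non-primitive place (with the $v\in S$ boundary terms absorbed into an additive $O(n)$ thanks to $O_{f}(x)\cap D = \emptyset$) yields
\[
(\text{non-primitive}) \;\leq\; \sum_{i=1}^{n} h_{Y_{i}}(f^{n-i}(x)) \;+\; O(n).
\]

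The core of the proof is to show this sum is of smaller order than $h_{D}(f^{n}(x))/(e+\epsilon)^{k}$ off a proper closed subset, and I would split it at a threshold $I_{0}$. For the tail $i>I_{0}$, use the uniform bound $h_{Y_{i}}\leq h_{D}+O(1)$ (a consequence of $Y_{i}\subset D$, with constant independent of $i$ since $\I_{Y_{i}} \supset \I_{D}$) together with \cref{prop:growthht} to obtain
\[
\sum_{i>I_{0}} h_{Y_{i}}(f^{n-i}(x)) \;\leq\; C\,\alpha_{f}(x)^{-I_{0}}\,h_{D}(f^{n}(x)),
\]
which is much smaller than $h_{D}(f^{n}(x))/(e+\epsilon)^{k}$ once $I_{0}$ is chosen large in terms of $k$. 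For the head $i\leq I_{0}$, only the fixed finite list $Y_{1},\dots,Y_{I_{0}}$ appears, and for each of these the proof of \cref{thm:vojta-hYvshH} applies verbatim: by \cref{prop:vojta-ctversion} applied to $(X, f^{-k'}(Y_{i}))$, \cref{lem:ct-vs-lct}, and $\lct(X, f^{-k'}(Y_{i}))\geq (e+\epsilon)^{-k'}$ for $k'\gg0$, one gets
\[
h_{Y_{i}}(f^{m}(x)) \;\leq\; 2(e+\epsilon)^{k'} h_{H-K_{X}}(f^{m-k'}(x)) + C
\]
off a proper closed subset depending only on $(i,k')$. Summing over $i=1,\dots,I_{0}$ and invoking \cref{prop:growthht} controls the head by a constant multiple of $(e+\epsilon)^{k'}\alpha_{f}(x)^{-k'}h_{D}(f^{n}(x))$, which is again much smaller than $h_{D}(f^{n}(x))/(e+\epsilon)^{k}$ once $k'$ is taken sufficiently large compared to $k$, since $(e+\epsilon)/\alpha_{f}(x)<1$.

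Finally, collecting the finitely many exceptional proper closed subsets arising in the estimates and pushing them forward under $f^{j}$ for the finitely many relevant $j$'s---this preserves properness because $\dim f(W) \leq \dim W < \dim X$ for any proper closed $W \subsetneq X$---produces a single proper closed $Z\subsetneq X$ such that $f^{n}(x)\notin Z$ forces $B_{n,0}>0$ for all $n\gg 0$. Enlarging $Z$ to also cover the finitely many small-$n$ cases gives the conclusion. The main obstacle is the carefully nested choice of parameters $\epsilon \to k \to I_{0} \to k'$, each depending on the previous, so that all error terms are simultaneously negligible against $h_{D}(f^{n}(x))/(e+\epsilon)^{k}$; a secondary subtlety is verifying that the exceptional sets from \cref{prop:vojta-ctversion} applied to the finitely many $f^{-k'}(Y_{i})$ can indeed be combined into one proper closed set---which is why the splitting at a finite threshold $I_{0}$ is crucial and cannot be bypassed without a uniform codim-two Vojta estimate across varying $i$.
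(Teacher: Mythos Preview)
Your proposal is correct and follows essentially the same approach as the paper's proof: both obtain the Vojta lower bound on $N_{S}^{(1)}(D,f^{n}(x))$ via \cref{prop:vojta-truncated-lct}, split the non-primitive contribution at a threshold (your $I_{0}$ is the paper's $l$), handle the tail by the crude bound $h_{D}(f^{m}(x))$ together with exponential height growth, and handle the head via the codimension-two estimate for the finitely many $Y_{i}=D\cap f^{-i}(D)$. The only cosmetic difference is that the paper invokes \cref{thm:vojta-hYvshH} directly for the head terms, whereas you unfold its proof inline with the extra parameter $k'$; the paper also bounds the tail by $h_{D}(f^{m}(x))$ directly rather than routing through $h_{Y_{i}}\leq h_{D}$, which spares it your uniformity-in-$i$ check.
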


\begin{proof}
First of all, we may assume $O_{f}(x)$ is Zariski dense because otherwise there is nothing to prove.

Let $e = e(D)$.
Take a small $\e>0$ so that $e+ \e < \alpha_{f}(x)$.
There is a $k_{0}$ such that for $k \geq k_{0}$, we have
\[
\lct(X,(f^{k})^{*}D) \geq \frac{1}{(e+\e)^{k}}.
\]

Fix a global height $h_{D}$.
Let $H$ be an ample divisor on $X$, and let $K_{X}$ be a canonical divisor of $X$.
Fix global height functions $h_{H}$ and $h_{K_{X}}$.
By \cref{prop:vojta-truncated-lct} (applied to $\e=1$), for any $k \geq k_{0}$ there is a proper closed subset $Z_{k} \subset X$ such that
\begin{align*}
& N_{S}^{(1)}(D, f^{n}(x)) \\
& \geq N_{S}^{(1)}((f^{k})^{*}D, f^{n-k}(x)) - C_{1} \\
& \geq \lct(X, (f^{k})^{*}D) h_{(f^{k})^{*}D}(f^{n-k}(x)) + h_{K_{X}}(f^{n-k}(x)) - h_{H}(f^{n-k}(x)) - C_{1}\\
& \geq \frac{1}{(e+ \e)^{k}} h_{D}(f^{n}(x)) - h_{H-K_{X}}(f^{n-k}(x)) - C_{2}
\end{align*}
for all $n\geq k$ if $f^{n-k}(x) \notin Z_{k}$.
Here $C_{i}$ are constants that are independent of $n$ and $h_{H-K_{X}}$ is a height associated with $H-K_{X}$.

Let us consider the sum
\begin{align*}
B_{n} = \sum \l_{D,v}(f^{n}(x))
\end{align*}
where the sum runs over $v \in M_{K} \setminus S$ such that
\begin{align*}
&\l_{D,v}(f^{n}(x)) > 0;\\
& \l_{D,v}(f^{m}(x)) \leq 0 \quad \text{for $0 \leq m \leq n-1$}.
\end{align*}
Note that if $B_{n} > 0$, then $f^{n}(x)$ has a primitive prime divisor outside $S$ with respect to $\l_{D}$.

We have
\begin{align*}
&B_{n}\\
& \geq N_{S}^{(1)}(D, f^{n}(x)) - \sum_{\tiny \txt{$v \notin S$, $\l_{D,v}(f^{n}(x))>0$, \\$\l_{D,v}(f^{m}(x) >0$ for some $m<n$}} \log \sharp (\O_{K}/\p_{v})^{1/[K:\Q]} - C_{3}
\end{align*}
where $C_{3}$ is independent of $n$.

For any integer $l\geq 1$, we have
\begin{align*}
&\sum_{\tiny \txt{$v \notin S$, $\l_{D,v}(f^{n}(x))>0$, \\$\l_{D,v}(f^{m}(x) >0$ for some $m<n$}} \log \sharp (\O_{K}/\p_{v})^{1/[K:\Q]}\\
&\leq \sum_{i=1}^{l}h_{D \cap (f^{i})^{*}D}(f^{n-i}(x)) + \sum_{m=0}^{n-l-1} h_{D}(f^{m}(x)) + C_{4}
\end{align*}
where $C_{4}$ is independent of $n$.
Here $h_{D \cap (f^{i})^{*}D}$ is a global height function associated to the codimension two subscheme $D \cap (f^{i})^{*}D$.

Then we get
\begin{align*}
&B_{n}\\
& \geq  \frac{1}{(e+ \e)^{k}} h_{D}(f^{n}(x)) - h_{H-K_{X}}(f^{n-k}(x)) \\
 &-\sum_{i=1}^{l}h_{D \cap (f^{i})^{*}D}(f^{n-i}(x)) - \sum_{m=0}^{n-l-1} h_{D}(f^{m}(x)) - C_{5}
\end{align*}
for $n \geq k, l+1$ and $f^{n-k}(x) \notin Z_{k}$.
Here $C_{5}$ is independent of $n$.

Since $ \alpha_{f}(x) > 1$ and $D$ is ample, by \cref{prop:growthht}, we have
\begin{align*}
C_{6} n^{a} \alpha_{f}(x)^{n} \leq h_{D}(f^{n}(x)) \leq C_{7} n^{a} \alpha_{f}(x)^{n} ;\\
h_{H-K_{X}}(f^{n}(x)) \leq C_{8} n^{a} \alpha_{f}(x)^{n}
\end{align*}
for $n \gg 0$, where $a\geq 0$ is an integer and $C_{6}, C_{7}, C_{8}$ are positive real numbers that are independent of $k$, $n$, and $l$.

Thus we get
\begin{align*}
& B_{n}\\
& \geq \frac{h_{D}(f^{n}(x))}{(e+\e)^{k}} \left( 1- \frac{C_{8} (n-k)^{a} \alpha_{f}(x)^{n-k} (e+\e)^{k}}{C_{6} n^{a} \alpha_{f}(x)^{n}} \right. \\
&\left. - \sum_{i=1}^{l}  \frac{(e+\e)^{k}h_{D \cap (f^{i})^{*}D}(f^{n-i}(x))}{h_{D}(f^{n}(x))}
-\sum_{m=0}^{n-l-1}  \frac{C_{7}m^{a} \alpha_{f}(x)^{m}}{C_{6}n^{a} \alpha_{f}(x)^{n}}  \right) - C_{5}\\
& \geq  \frac{h_{D}(f^{n}(x))}{(e+\e)^{k}} \left( 1 - \frac{C_{8} (e+\e)^{k}}{C_{6} \alpha_{f}(x)^{k}} \right. \\
& \left. -\sum_{i=1}^{l} \frac{C_{7}(e+\e)^{k}}{C_{6}} \frac{h_{D \cap (f^{i})^{*}D}(f^{n-i}(x))}{ h_{D}(f^{n}(x))}  - \frac{C_{7}}{C_{6}( \alpha_{f}(x)^{l}-1)} \right) - C_{5}.
\end{align*}
Now, fix a large $k$ so that
\[
\frac{C_{8} (e+\e)^{k}}{C_{6} \alpha_{f}(x)^{k}} \leq \frac{1}{4}.
\]
Next, fix a large $l$ so that
\[
\frac{C_{7}}{C_{6}( \alpha_{f}(x)^{l}-1)} \leq \frac{1}{4}.
\]
By our assumption, $D \cap (f^{i})^{*}D$ has codimension two and $e(D \cap (f^{i})^{*}D) \leq e(D) < \alpha_{f}(x)$.
By \cref{thm:vojta-hYvshH}, there is a proper closed subset $Z' \subset X$ such that
\[
\sum_{i=1}^{l} \frac{C_{7}(e+\e)^{k}}{C_{6}} \frac{h_{D \cap (f^{i})^{*}D}(f^{n-i}(x))}{ h_{D}(f^{n}(x))} \leq \frac{1}{4}
\]
if $f^{n}(x) \notin Z'$.
Thus, if $n \gg 0$ and $f^{n-k}(x) \notin Z(k)\cup Z'$, we have
\begin{align*}
B_{n} \geq \frac{h_{D}(f^{n}(x))}{4(e+\e)^{k}} - C_{5} > 0
\end{align*}
since $C_{5}$ is independent of $n$.
This finishes the proof.
\end{proof}

\begin{ex}
Let $X = \P^{2}_{\Q}$ and 
\[
f \colon \P^{2}_{\Q} \longrightarrow \P^{2}_{\Q}, (X_{0}:X_{1}:X_{2}) \mapsto (X_{0}^{2}: X_{1}X_{2} : X_{1}^{2}+X_{2}^{2}).
\]
Let $D = (X_{0}=0)$.
Then we can show $e(D) = 2$.
Note that for any point $x=(a:b:c) \in \P^{2}(\Q)$, if it has infinite $f$-orbit, then $ \alpha_{f}(x) = 2 = e(D)$.
As $f^{n}(x)$ is of the form $(a^{2^{n}}: \ast : \ast)$, $f^{n}(x)$ do not have primitive prime divisors 
with respect to $D$ (outside $\{ \infty \}$) for any $x$ and $n\geq 1$.

\end{ex}

\end{document}